\documentclass[a4paper,12pt]{article}
\usepackage{amsthm}
\usepackage{amssymb}
\usepackage{amsmath}
\usepackage{amsfonts}
\usepackage{color}
\usepackage{graphicx}

\newtheorem{remark}{Remark}[section]
\newtheorem{example}{Example}[section]

\newtheorem{assumption}{Assumption}

\newtheorem{theorem}{Theorem}[section]
\newtheorem{proposition}{Proposition}[section]
\newtheorem{corollary}{Corollary}[section]
\newtheorem{definition}{Definition}[section]

\def\b1{\mbox{\boldmath $1$}}

\newenvironment{demo*}{\vspace{3mm}\noindent{\bf Proof.}}{\hfill $\Box$ \vspace{3mm}}

\begin{document}
\title{\bf \Large  A Modified Dependence Measure Related to Chatterjee's Rank Correlation: Theoretical Properties and Asymptotic Analysis  }
{\author{\normalsize{Chuancun Yin}
\\
{\normalsize\it  School of Statistics  and Data Science, Qufu Normal University}
\\
\noindent{\normalsize\it Shandong 273165, China}\\e-mail:  ccyin$@$qfnu.edu.cn}}
\maketitle
\vskip0.01cm
\noindent{ {\bf Abstract} In his recent breakthrough work [JASA, 2021],  Chatterjee  proposed a rank-based correlation coefficient $\xi(X,Y)$ to measure the dependence of a random variable $Y$ on $X$. Unlike classical measures such as Pearson, Spearman, or Kendall, $\xi$ satisfies $\xi=0$ if and only if $X$ and $Y$ are independent, and $\xi=1$ if and only if $Y$ is a measurable function of $X$, without requiring monotonicity or linearity.
This paper proposes a refined measure of  $\xi(X,Y)$ and investigates its theoretical properties. We derive several equivalent representations, construct an estimator, and establish its strong consistency as well as its asymptotic distribution. A natural extension of the proposed framework is also presented.  The Monte Carlo simulations show that the proposed estimator outperforms Chatterjee's rank correlation in terms of finite-sample performance, particularly in controlling Type I error under the null.
\medskip

\noindent{\bf Keywords:}  {\rm  { Asymptotic distribution, Chatterjee's correlation, Dependence measure,  Equivalent representations, Rank statistics, Strong consistency } }




\numberwithin{equation}{section}
\section{Introduction}\label{intro}

Measuring dependence between random variables is a fundamental task in statistics and machine learning.
Classical measures such as Pearson's correlation coefficient capture only linear relationships, while rank-based
alternatives like Spearman's $\rho$ and Kendall's $\tau$ are restricted to monotonic associations.
This limitation prevents them from detecting complex, non-monotonic structures that frequently arise
in modern data.
Recently,  Chatterjee \cite{C2021} proposed a rank-based coefficient    that overcomes
this restriction. We recall  the definition of Chatterjee's correlation.  Let $(X_1, Y_1),
 \cdots, (X_n, Y_n)$  be a finite  independent and identically distributed
sample from $(X, Y)$, where $Y$ is non-degenerate. Rearrange
 $(X_1, Y_1),$ $\cdots, (X_n, Y_n)$ as $(X_{(1)}, Y_{(1)}), \cdots, (X_{(n)}, Y_{(n)})$ such that $X_{(1)}\le\cdots\le X_{(n)}$ with ties broken at random, where  $Y_{(1)}, \cdots, Y_{(n)}$ denote the concomitants (see Yang \cite{Y1977}). Let  $R_i =\sum_{j=1}^n 1(Y_{(j)}\le Y_{(i)})$ be the rank of $Y_{(i)}$ and let
 $l_i =\sum_{j=1}^n 1(Y_{(j)}\ge Y_{(i)})$, where $1_{(\cdot)}$ representing the indicator function. Chatterjee  \cite{C2021} defined the following correlation
coefficient
\begin{eqnarray}
\xi_n(X,Y)=1-\frac{n\sum_{j=1}^{n-1}|R_{i+1}-R_i|}{2\sum_{j=1}^n l_i(n-l_i)}.
\end{eqnarray}
If there are no ties among $X_i's$, then  $\xi_n(X,Y)$ can be written as the following simple form
$$\xi_n(X,Y)=1-\frac{3\sum_{j=1}^{n-1}|R_{i+1}-R_i|}{n^2-1}.$$
  As showed by Chatterjee \cite{C2021}, the
coefficient  $\xi_n(X,Y)$ possesses several key properties as follow (Hereafter, when considering correlation, we use the term correlation measure to represent
population quantities and correlation coefficient to represent sample quantities):

(a) $\xi_n(X,Y)$ converges almost surely to $\xi(X,Y)$,
where
\begin{eqnarray}
\xi(X,Y)=\frac{\int{\rm Var}({\mathbb E}(1_{\{Y\ge t\}}|X)){\rm d}F_Y(t)}{\int {\rm Var}(1_{\{Y\ge t\}}){\rm d}F_Y(t)},
\end{eqnarray}
as long as $Y$ is not almost surely a constant.   This measure was  first proposed in   Dette, Siburg and Stoimenov  \cite{DSS2013} in the special case where $X$ and $Y$ are absolutely
continuous random variables, which can be expressed as
\begin{eqnarray}
\xi(X,Y)=6\int\int P^2(Y\le t|X=x){\rm d}F_X(x){\rm d}F_Y(t)-2.
\end{eqnarray}

 Formula (1.2) recovers the population dependence measure of Dette, Siburg and Stoimenov \cite{DSS2013}, which Chatterjee \cite{C2021} operationalized via a rank statistic. Henceforth, we refer to it as the Dette--Siburg--Stoimenov (DSS) dependence measure.
   The DSS measure    satisfies $0\le\xi\le 1$.  It attains $0$ if and only if $X$ and $Y$ are independent, and $1$ if and only if $Y$ is a measurable function of $X$.

(b)  Under independence, i.e., $\xi(X, Y) = 0$, it follows that $E\xi_n(X,Y)=0$ (see Lin and Han \cite{LH2025}). Furthermore, as $n\to\infty$,
  $\sqrt{n}\xi_n(X, Y)\to N(0, \tau^2)$ in distribution as $n\to\infty$, where
$\tau^2$ is a positive constant defined by (3) in Chatterjee \cite{C2021}.  The number
$\tau^2$ is strictly positive if $Y$ is not a constant, and equals 2/5 if $Y$ is continuous.


Since its introduction, Chatterjee's \cite{C2021} rank-based dependence measure has seen rapid and widespread adoption in both statistical theory and machine learning practice.
Notable contributions include
Azadkia and Chatterjee \cite{AC2021}, Cao and Bickel \cite{CB2020}, Shi, Drton, and Han \cite{SDH2022},  Deb, Ghosal, and Sen \cite{DGS2020}, Huang, Deb, and Sen \cite{HDS2022}, Shi, Drton, and Han \cite{SDH2023}, Lin and Han \cite{LH2022, LH2023},    Griessenberger, Junker, and Trutschnig \cite{GJT2022},  Zhang \cite{Z2023, Z2025, Z2026}, Bickel \cite{B2022}, Han and Huang \cite{HH2024}, Dette and Kroll \cite{DGS2025}, H\"ormann and Strenger \cite{HS2026},
Ansari and Rockel \cite{AR2026}, and Ansari, et al. \cite{ALFT2026}.
A most recent paper by Lin and Han \cite{LH2025} generalizes Chatterjee's \cite{C2021} results by establishing asymptotic normality for the rank correlation coefficient under general dependence---rather than just independence, constructing a consistent variance estimator, and extending these results to the multivariate Azadkia--Chatterjee graph-based measure.

%


\begin{table}[h]
\centering
\caption{Comparison between Chatterjee \cite{C2021} and Lin and Han \cite{LH2025}}
\label{tab:comparison}
\begin{tabular}{lcc}
\hline
Feature & Chatterjee \cite{C2021} & Lin and Han \cite{LH2025} \\
\hline
Asymptotic Normality & Under Independence & Under General Dependence \\
Variance Bound & Not specified & Uniformly bounded by 36 \\
Variance Estimation & Open problem & Consistent estimator provided \\
Scope & Bivariate & Multivariate (Graph-based) \\
Primary Focus & Consistency & Statistical Inference \\
\hline
\end{tabular}
\end{table}

 Despite of  Chatterjee's rank correlation $\xi_n$  has so many  appealing properties, however, we remak that it also has possibly  some  drawbacks as follows.
 Since the DSS estimator
$\xi_n(X,Y)$ converges almost surely to  the DSS measure $\xi(X,Y)\in [0,1]$,  it follows that $\xi_n(X,Y)\ge 0$ (a.s.) for all sufficiently large $n$.
However,  Under the assumption of independence between $X$ and $Y$, the condition $E[\xi_n(X,Y)]=0$ ensures that $\xi_n$
remains a zero-mean statistic. As a result, for any $n\ge 2$, $\xi_n$ is not constrained to non-negative values and may indeed be negative.
Moreover,  Chatterjee (\cite{C2021}, Theorems 2.1-2.2) gives  $\sqrt{n}\xi_n(X, Y)\to N(0, \tau^2)$ under independence,  it follows that
$\lim_{n\to\infty} P(\xi_n(X, Y)<0)=\frac12.$  Therefore,  the sequence  $\xi_n$ provides a poor approximation of the limit   $\xi$.

Except for  above drawbacks,  Chatterjee's test of independence can  suffer from some lack of power (see, e.g.,  Shi, Drton, and Han \cite{SDH2022}).  Moreover, the symmetrized version fails the triangle inequality (Chierichetti et al., \cite{CGK2026}, limiting its use as a metric in clustering or MDS.

  The simple reason    for Chatterjee's coefficient suffering from the drawbacks above   is that it does
not fully utilize the information of the sample. Specifically, Chatterjee's coefficient only
includes information between the two nearest neighbours, ignoring information between
farther neighbours. The similar problems are also appear in  Azadkia and Chatterjee \cite{AC2021}. To avoid this drawback, and for
other reasons as mentioned in Xia et al. \cite{XCDC2025},  Lin and Han \cite{LH2023}, various alternative measures have been
proposed recently.

In this work, we develop a corrected measure for DSS dependence measure, rigorously examining its mathematical structure through several equivalent characterizations. We propose a consistent estimator, establish its strong consistency and asymptotic normality, and extend the methodology to a broader context.

The remainder of this paper is organized as follows. Section 2 introduces the proposed measure and derives its key theoretical properties, including several equivalent representations. Section 3 constructs the estimator and establishes its strong consistency and asymptotic distribution. A natural generalization of the framework is presented in Sections 4 and 5.
Section 6 reports Monte Carlo simulation results comparing the finite-sample performance of the proposed estimator with that of  Chatterjee's coefficient. Finally, Section 7 concludes the paper.
\numberwithin{equation}{section}

\section{ A modification of the DSS measure}

We propose a modification of the DSS measure that remains consistent under  continuous framework.

\begin{definition}
Let $X$ and $Y$ be random variables defined on a common probability space, taking values in ${\mathbb R}$. Assume further that $Y$ is non-degenerate. Denote the joint
bivariate distribution function of $(X, Y)$ by $F_{X,Y}$, and the marginal distribution functions of
$X$ and $Y$ by $F_X$ and $F_Y$, respectively. Define the measure of dependence between $X$ and $Y$:
 \begin{eqnarray}
\xi_+(X,Y)=\frac{\int{\rm Var}({\Bbb E}(1_{\{Y>t\}}|X)){\rm d}F_Y(t)}{\int {\rm Var}(1_{\{Y>t\}}){\rm d}F_Y(t)},
\end{eqnarray}
\end{definition}
  In the special case where   $Y$ is a
continuous random variable,  $\xi_+(X,Y)$ reduces to
 \begin{eqnarray}
\xi_+(X,Y)=6\int\int F^2_{Y|X}(t,x) {\rm d}F_X(x){\rm d}F_Y(t)-2,
\end{eqnarray}
  which coincides with (1.3). In addition, when $X$ is a discrete random variable with $P(X=x_r)=p_r, r=1,2,\cdots$, then
  \begin{eqnarray}
\xi_+(X,Y)=6\sum_r p_r\int F^2_{Y|X}(t,x_r){\rm d}F_Y(t)-2,
\end{eqnarray}

Clearly, $\xi_+ (X,Y)\le \xi(X,Y)$. However, in general, $\xi_+ (X,Y)\neq  \xi(X,Y)$. Here is a counterexample:
\begin{example}
Consider $X\in\{0,1\}$ and $Y\in\{0,1,2\}$ with joint distribution:
\[
\begin{array}{c|ccc|c}
& Y=0 & Y=1 & Y=2 & P(X) \\
\hline
X=0 & 0.3 & 0.2 & 0.1 & 0.6 \\
X=1 & 0.2 & 0.1 & 0.1 & 0.4 \\
\hline
P(Y) & 0.5 & 0.3 & 0.2 & 1 \\
\end{array}
\]
Then,  $\xi_+ \approx 0.002890$,   $\xi \approx 0.003115$, and Pearson's correlation coefficient $\rho \approx  0.0523$. Hence, $\xi_+\approx 1.059\rho^2,\, \xi\approx 1.141\rho^2$.
\end{example}
\begin{remark}
Let $Y$ be a binary random variable taking values $\{y_1,y_2\}$. Then,
$$
\xi_+(X,Y) = \xi(X,Y)=\rho^2,
$$
where $\rho$ is the Pearson correlation coefficient between $X$ and $Y$. For non-binary $Y$, $\xi_+(X,Y)$ and $\xi(X,Y)$  measure general dependence beyond linearity, and coincide with $\rho^2$ only under independence or perfect linear dependence.  $\xi(X,Y)$  places higher weight on the upper tail of $Y$, making it more sensitive to dependence in the upper tail compared to the  $\xi_+(X,Y)$.
\end{remark}
 The following example demonstrates that $\xi_+$, analogous to  Spearman's   $\rho_S$ and Kendall's   $\tau$, admits a closed-form expression.
\begin{example}
Let $(X,Y) \sim N(0,0,1,1; r)$ denote a bivariate normal distribution
with means $(0,0)$, variances $(1,1)$, and Pearson correlation coefficient $r$.
 Then,
\[\xi(r)\equiv\xi_+(X,Y)=  \frac{3}{\pi} \arcsin\left(\frac{1+r^2}{2} \right)-\frac{1}{2}.\]
It is well known  the population Spearman's rank correlation $\rho_S$ and Kendall's tau $\tau$
are known in closed form:
\begin{eqnarray*}
\rho_S(r) \;&=\; \frac{6}{\pi}\,\arcsin\!\left(\frac{r}{2}\right),\\
\tau (r)   \;&=\; \frac{2}{\pi}\,\arcsin(r).
\end{eqnarray*}
For $r\in(0,1)$, the strict ordering holds:
\[
0 < \xi_+(r) < \tau(r) < \rho_S(r) < r < 1.
\]
For $r\in(-1,0)$, since $\xi_+$ is an even function of $r$ while the others are odd,
\[
\xi_+(r) > 0 > \tau(r) > \rho_S(r) > r > -1.
\]
In particular,
\begin{itemize}
  \item $r = 0 \;\Longrightarrow\; \xi_+=0,\; \rho_S = 0,\; \tau = 0$;
  \item $r = 1 \;\Longrightarrow\; \xi_+=1,\; \rho_S = 1,\; \tau = 1$;
  \item $r = -1 \;\Longrightarrow\; \xi_+=1,\; \rho_S = -1,\; \tau = -1$.
\end{itemize}
\end{example}

In order to better understand expression (2.1) and make it easier to compute in practical
use cases, we established  several equivalent representations for $\xi_+$.
\begin{proposition}  Measure (2.1)   has the following equivalent representations:
\begin{eqnarray}
  \xi_+(X,Y)&=&1-\frac{\int{\Bbb E}({\rm Var}(1_{\{Y> t\}}|X)){\rm d}F_Y(t)}{\int {\rm Var}(1_{\{Y>t\}}){\rm d}F_Y(t)};\nonumber\\
 \xi_+(X,Y) &=&1-\frac{\int{\Bbb E}({\rm Var}(1_{\{Y\le t\}}|X)){\rm d}F_Y(t)}{\int {\rm Var}(1_{\{Y\le t\}}){\rm d}F_Y(t)};\nonumber\\
 \xi_+(X,Y) &=&\frac{\int{\rm Var}({\Bbb E}(1_{\{Y\le t\}}|X)){\rm d}F_Y(t)}{\int {\rm Var}(1_{\{Y\le t\}}){\rm d}F_Y(t)};\nonumber\\
 \xi_+(X,Y) &=&\frac{\int[{\Bbb E}({\Bbb E}^2(1_{\{Y> t\}}|X))-P^2(Y> t)] {\rm d}F_Y(t)}{\int [P(Y> t)-P^2(Y> t)] {\rm d}F_Y(t)}; \nonumber\\
 \xi_+(X,Y) &=&\frac{\int[{\Bbb E}(F_{Y|X}^2(t))-F_Y^2(t)] {\rm d}F_Y(t)}{\int [F_Y(t)-F^2_Y(t)] {\rm d}F_Y(t)}; \\
 \xi_+(X,Y) &=&\frac{\int{\Bbb E}[(F_{Y|X}(t)-F_Y(t)]^2 {\rm d}F_Y(t)}{\int [F_Y(t)-F^2_Y(t)] {\rm d}F_Y(t)}.
  \end{eqnarray}
\end{proposition}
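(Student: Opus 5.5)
The plan is to derive every representation from the definition (2.1) using two elementary facts, applied pointwise in $t$ and then integrated against ${\rm d}F_Y(t)$. The first is the law of total variance: for each fixed $t$,
\[
{\rm Var}(1_{\{Y>t\}})={\rm Var}\big({\Bbb E}(1_{\{Y>t\}}|X)\big)+{\Bbb E}\big({\rm Var}(1_{\{Y>t\}}|X)\big).
\]
The second is the complement identity $1_{\{Y\le t\}}=1-1_{\{Y>t\}}$. This identity leaves variances and conditional variances unchanged, and it changes conditional expectations only by a constant and a sign: ${\Bbb E}(1_{\{Y\le t\}}|X)=1-{\Bbb E}(1_{\{Y>t\}}|X)=F_{Y|X}(t)$. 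We also need the denominator to be finite and positive. It equals $\int P(Y>t)(1-P(Y>t))\,{\rm d}F_Y(t)\le 1/4$, and it is strictly positive because $Y$ is non-degenerate. I will argue this as follows: if $P(Y>t)\in\{0,1\}$ for $F_Y$-almost every $t$, then $Y$ is almost surely constant.

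First, I integrate the total-variance identity over $t$. All terms are nonnegative and bounded by $1/4$, so Fubini and linearity apply with no integrability issues. Dividing by the denominator gives the first representation, $\xi_+=1-\int{\Bbb E}{\rm Var}(\cdot|X)/\int{\rm Var}(\cdot)$. Second, the complement identity shows that the integrands in the second and third representations coincide pointwise in $t$ with those in the first representation and in (2.1), since ${\rm Var}(1-Z)={\rm Var}(Z)$ holds both conditionally and unconditionally. Third, I expand ${\rm Var}({\Bbb E}(1_{\{Y>t\}}|X))={\Bbb E}[{\Bbb E}^2(1_{\{Y>t\}}|X)]-P^2(Y>t)$, using the tower property ${\Bbb E}\,{\Bbb E}(1_{\{Y>t\}}|X)=P(Y>t)$. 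I also write ${\rm Var}(1_{\{Y>t\}})=P(Y>t)-P^2(Y>t)$. Together these give the fourth representation.

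For (2.4) and (2.5), I apply the same expansion to the $\le$ version (the third representation). With $F_{Y|X}(t)={\Bbb E}(1_{\{Y\le t\}}|X)$ and ${\Bbb E}F_{Y|X}(t)=F_Y(t)$, the numerator integrand becomes ${\Bbb E}F^2_{Y|X}(t)-F_Y^2(t)$. The denominator integrand becomes $F_Y(t)-F_Y^2(t)$, which gives (2.4). Finally, ${\Bbb E}[(F_{Y|X}(t)-F_Y(t))^2]={\Bbb E}F_{Y|X}^2(t)-2F_Y(t){\Bbb E}F_{Y|X}(t)+F_Y^2(t)={\Bbb E}F_{Y|X}^2(t)-F_Y^2(t)$, which gives (2.5).

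The only step needing genuine care is the measure-theoretic point that $F_{Y|X}(t)$ is taken as a regular conditional distribution, jointly measurable in $(t,\omega)$. This joint measurability is what justifies integrating the conditional quantities in $t$ and applying Fubini. I would invoke the existence of regular conditional distributions on ${\mathbb R}$, and this is where I expect the main (mild) obstacle. The positivity of the denominator, needed for all the ratios to make sense, is the other point to state explicitly. Everything else is routine algebra.
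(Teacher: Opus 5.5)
Your proposal is correct and follows the paper's proof. The paper likewise derives every representation pointwise in $t$ from the law of total variance together with ${\rm Var}(1_{\{Y>t\}})=F_Y(t)-F_Y^2(t)$ and ${\rm Var}(\mathbb{E}(1_{\{Y>t\}}|X))=\mathbb{E}(F_{Y|X}^2(t))-F_Y^2(t)$, and it leaves the complement step and the positivity of the denominator implicit. The measurability point you flag is milder than you suggest: each integrand is a deterministic function of $t$ that is a difference of two nondecreasing functions (e.g.\ $\mathbb{E}F_{Y|X}^2(t)-F_Y^2(t)$), hence Borel, so neither regular conditional distributions nor Fubini are actually needed.
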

\begin{proof} The result follows directly from the well-known formula
$${\rm Var}(Y)={\rm Var}({\Bbb E}(Y|X))+{\Bbb E}({\rm Var}(Y|X)),$$
\begin{eqnarray}
{\rm Var}(1_{\{Y> t\}})=F_Y(t)-F^2_Y(t),\nonumber
\end{eqnarray}
and
\begin{eqnarray}
{\rm Var}({\Bbb E}(1_{\{Y>t\}}|X))= {\Bbb E}(F_{Y|X}^2(t))-F_Y^2(t).\nonumber
\end{eqnarray}
 \end{proof}

 \begin{remark} The form of (2.5) has appeared earlier in the
literature of Gamboa, Klein, and Lagnoux \cite{GKL2018}.
 \end{remark}

 \begin{remark} $\xi(X,Y)$ also has the  similar  equivalent representations as  $\xi_+(X,Y)$. For example,
  \begin{eqnarray*}
 \xi(X,Y) &=&\frac{\int[{\Bbb E}(F_{Y|X}^2(t-))-F_Y^2(t-)] {\rm d}F_Y(t)}{\int [F_Y(t-)-F^2_Y(t-)] {\rm d}F_Y(t)};\\
 \xi(X,Y) &=&\frac{\int{\Bbb E}[(F_{Y|X}(t-)-F_Y(t-)]^2 {\rm d}F_Y(t)}{\int [F_Y(t-)-F^2_Y(t-)] {\rm d}F_Y(t)}.
  \end{eqnarray*}
 The key difference between $ \xi(X,Y)$ and $\xi_+(X,Y) $ lies in the specification of the integrator distribution function: whereas  $ \xi(X,Y)$ employs the left-continuous version of $F$,  $ \xi_+(X,Y)$  adopts the right-continuous version. This modification is required to ensure the Lebesgue-Stieltjes integral with respect to $F_Y$ is well-defined, as standard probability theory conventions mandate right-continuous cumulative distribution functions (CDFs) to uniquely handle probability masses at jump points (atoms of the distribution).
 \end{remark}

 We use a simple example below to build intuition for the distinction between $\int_{-\infty}^{\infty} F(x)dF(x)$ and $\int_{-\infty}^{\infty}F(x-)dF(x)$. Here $F(x-)=\lim_{t\uparrow x}F(t)$ denotes the left-limit of the CDF.
\begin{example}
 Let $X$ be a discrete random variable with distribution
$P(X=0)=P(X=1)=0.5$.
The cumulative distribution function (CDF) $F(x)=P(X\le x)$ is given by
\[
F(x)=
\begin{cases}
0,   & x<0,\\[4pt]
0.5, & 0\le x<1,\\[4pt]
1,   & x\ge 1.
\end{cases}
\]

For a discrete distribution, the Lebesgue--Stieltjes integral
$\int g(x)\,\mathrm{d}F(x)$ reduces to a sum over the jump points of $F$:
\[
\int g(x)\,\mathrm{d}F(x)=\sum_{x_j} g(x_j)\,P(X=x_j),
\]
where the sum is taken over all atoms (jump points) of the distribution.

\medskip
{\bf (i) Evaluation of $\displaystyle\int F(x){d}F(x)$}

The jump points are $x=0$ and $x=1$, with probabilities $P(X=0)=0.5$ and
$P(X=1)=0.5$. Since $F$ is right-continuous, $F(0)=0.5$ and $F(1)=1$. Hence
\[
\int F(x)dF(x)
= F(0)P(X=0)+F(1)P(X=1)
= 0.5\times 0.5+1\times 0.5
= 0.75.
\]
{\bf (ii) Evaluation of $\displaystyle\int F(x-)\,\mathrm{d}F(x)$}

At the jump points we have
$F(0-)=0$ and $F(1-)=0.5$. Therefore
\[
\int F(x-)dF(x)
= F(0-)P(X=0)+F(1-)P(X=1)
= 0\times 0.5+0.5\times 0.5=0.25.
\]
\medskip
{\bf Summary:}
$\displaystyle\int F(x)dF(x)=0.75$ and
$\displaystyle\int F(x-)dF(x)=0.25$.
\end{example}

The population measure $\xi_+(X,Y)$, as a  modification  of the DSS dependence measure,  shares  the core theoretical guarantees of Chatterjee \cite{C2021}.
 \begin{proposition}  The measure  $\xi_+(X,Y)$ has the following interesting properties:\\
i) $0\le \xi_+(X,Y)\le 1$ for all random pairs $(X,Y)$;\\
ii) $\xi_+(X,Y) = 0$ if and only if $X$ and $Y$ are independent;\\
iii) $\xi_+ (X,Y)= 1$ if and only if   $Y = f(X)$ almost surely for some measurable $f$.
 \end{proposition}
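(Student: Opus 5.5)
The plan is to work with the representations in Proposition 2.1, especially the one with $\int{\Bbb E}[(F_{Y|X}(t)-F_Y(t))^2]\,{\rm d}F_Y(t)$ in the numerator, where $F_{Y|X}(t)=P(Y\le t\mid X)$ is a regular conditional distribution function. Throughout I take this version to be right-continuous and nondecreasing in $t$ for every $\omega$.

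The first step is to check that the denominator $D=\int [F_Y(t)-F_Y^2(t)]\,{\rm d}F_Y(t)={\Bbb E}[F_Y(Y)(1-F_Y(Y))]$ is strictly positive, so that $\xi_+$ is well defined. Two facts give this. First, $F_Y(Y)>0$ almost surely. Second, $F_Y(Y)<1$ fails only on the event $\{Y=y^*\}$, where $y^*$ would be a largest atom carrying all the remaining mass. Since $Y$ is non-degenerate, $P(F_Y(Y)<1)>0$, and hence $D>0$.

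For part i), the numerator $\int {\rm Var}({\Bbb E}(1_{\{Y>t\}}|X))\,{\rm d}F_Y(t)$ is nonnegative. The first representation in Proposition 2.1, which comes from the variance decomposition, writes $\xi_+=1-(\text{nonnegative})/D$. Together these give $0\le\xi_+\le1$.

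For parts ii) and iii), the key observation is that each boundary value forces a pointwise identity for $F_Y$-almost every $t$.
\begin{itemize}
\item $\xi_+=0$ holds iff there is a Borel set $A$ with $P(Y\in A)=1$ such that $F_{Y|X}(t)=F_Y(t)$ a.s. for all $t\in A$.
\item $\xi_+=1$ holds iff ${\Bbb E}\,{\rm Var}(1_{\{Y\le t\}}|X)={\Bbb E}[F_{Y|X}(t)(1-F_{Y|X}(t))]=0$, i.e.\ iff $F_{Y|X}(t)\in\{0,1\}$ a.s., for all $t$ in such a set $A$.
\end{itemize}
The converse directions are immediate. Independence gives $F_{Y|X}=F_Y$, so $\xi_+=0$. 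If $Y=f(X)$, then $F_{Y|X}(t)=1_{\{f(X)\le t\}}$, so every conditional variance vanishes and $\xi_+=1$.

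The main obstacle is upgrading ``for $F_Y$-a.e.\ $t$'' to ``for all $t$'', or at least to all rational $t$. Fix an arbitrary $t$ and consider three cases.
\begin{itemize}
\item If $t$ is an atom of $F_Y$, then $t\in A$, since $A^c$ is $F_Y$-null.
\item If $P(t<Y\le t+\varepsilon)>0$ for every $\varepsilon>0$, then each interval $(t,t+1/k]$ meets $A$. Choose $t_k\in A$ with $t_k\downarrow t$. Take the countable union of the exceptional null sets and use right-continuity of both $F_{Y|X}(\cdot)$ and $F_Y$. This gives the identity at $t$; in part iii) the limit of $\{0,1\}$-valued quantities stays in $\{0,1\}$.
\item Otherwise $Y$ puts no mass on $(t,s)$ for $s=\inf\{u>t:F_Y(u)>F_Y(t)\}$, and $s$ falls under one of the previous cases. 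Since $P(t<Y<s)=0$ and $P(Y\le t\mid X)\le P(Y<s\mid X)$ with $P(t<Y<s\mid X)=0$ a.s., we get $F_{Y|X}(t)=F_{Y|X}(s-)$ a.s., with $F_Y(t)=F_Y(s-)$. Approaching $s$ from the left through points of $A$ gives the identity at $t$, unless $s$ is isolated on the left; in that case $s$ itself is an atom, and I would use monotone limits at $s-$ directly.
\end{itemize}
This is the step I would write out most carefully.

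Once the identities hold a.s.\ simultaneously for all rational $t$, right-continuity extends them to all $t$ off a single null set. In case ii) this gives $P(Y\le t\mid X)=F_Y(t)$ for all $t$, which is independence of $X$ and $Y$. In case iii) the conditional law of $Y$ given $X$ is almost surely a point mass. Define $f(X)=\inf\{t\in{\Bbb Q}:F_{Y|X}(t)=1\}$, which is a measurable function of $X$. Then $P(Y\le f(X)\mid X)=1$ and $P(Y<f(X)\mid X)=0$, so $Y=f(X)$ almost surely.
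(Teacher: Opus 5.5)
Most of your plan is sound, and in places more careful than the paper's proof. The positivity of $D$, part i) via the variance decomposition, the reduction of ii) and iii) to identities valid for $F_Y$-a.e.\ $t$, and the final passage to independence or to $Y=f(X)$ are all fine. (The paper ends iii) by showing $\mathbb{E}(1_{\{Y\le t\}}-\eta)^2=\mathbb{E}\eta-\mathbb{E}\eta^2=0$, which is equivalent to your point-mass argument.) The paper passes over the upgrade from ``$F_Y$-a.e.\ $t$'' to ``all $t$'' in a single phrase, so you are right to isolate it, but your third case does not close it.

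In that case ($P(t<Y<s)=0$), the identity you need at $t$ is the identity for $F_{Y|X}(s-)=P(Y<s\mid X)$, a left limit at $s$. Suppose $s$ is an atom. Everything you know at $s$ (membership in $A$, right-continuity) concerns $P(Y\le s\mid X)$, not $P(Y<s\mid X)$. Every point of $(t,s)$ carries exactly the same unknown identity as $t$. And you cannot assume $A$ meets the null interval $(t,s)$ at all. So ``approaching $s$ from the left through $A$'' and ``monotone limits at $s-$'' are circular.

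This is not a corner case. Take $Y$ uniform on $\{0,1,2\}$, $A=\{0,1,2\}$ and $t=1/2$. Then $s=1$, and $P(Y\le 1\mid X)=P(Y\le 1)$ tells you nothing about $P(Y\le 0\mid X)$, which is what the identity at $t$ asserts. For any $Y$ with finitely many support points, every rational $t$ strictly between consecutive support points is of this type, so your ``all rational $t$'' step rests on it. Two smaller points:
\begin{itemize}
\item $s$ isolated on the left need not be an atom, e.g.\ $Y$ uniform on $[0,1]\cup[2,3]$ with $t=1$. That subcase is harmless, since $P(Y=s)=0$ lets you replace $s-$ by $s$ and use your second case.
\item $s=+\infty$ when $F_Y(t)=1$, which is trivial.
\end{itemize}

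The repair is to move left rather than right. For $F_Y(t)>0$ let $\tau=\inf\{u:F_Y(u)\ge F_Y(t)\}\le t$. Then $F_Y(\tau)=F_Y(t)$ and $P(\tau<Y\le t)=0$, so $F_{Y|X}(t)=F_{Y|X}(\tau)$ a.s.; moreover $F_Y(u)<F_Y(\tau)$ for all $u<\tau$. Hence either $\tau$ is an atom, so $\tau\in A$, or $P(u<Y<\tau)>0$ for every $u<\tau$.

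In the latter case choose $u_k\in A$ with $u_k\uparrow\tau$. Passing to the limit off a single null set gives $P(Y<\tau\mid X)=F_Y(\tau-)=F_Y(\tau)$ a.s.\ in ii), respectively $P(Y<\tau\mid X)\in\{0,1\}$ a.s.\ in iii). Then $P(Y=\tau\mid X)=0$ a.s.\ transfers this to $F_{Y|X}(\tau)$, hence to $F_{Y|X}(t)$. If $F_Y(t)=0$, then $F_{Y|X}(t)=0$ a.s.\ trivially. This single argument handles every $t$ and replaces your three cases.
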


\begin{proof} The proof of statement (i)  is straightforward and thus omitted.
     The sufficiency arguments for (ii) and (iii) are immediate: if $X$ and $Y$ are independent, then $\mathbb{E}[1_{\{Y> t\}}\mid X] = \mathbb{E}[1_{\{Y> t\}}]$ for all $t$, so the numerator of $\xi_+$ vanishes, giving $\xi_+=0$. If $Y=f(X)$ a.s., then $1_{\{Y> t\}} = 1_{\{f(X)> t\}}$ is a function of $X$, so the conditional variance in the numerator equals the unconditional variance in the denominator, giving $\xi_+=1$.   We now prove the necessity directions.\\
{\bf Necessity of (ii):}
If $\xi=0$, the numerator must vanish:
\[
\int \operatorname{Var}\left(\mathbb{E}\left[\mathbf{1}_{\{Y\ge t\}}\mid X\right]\right) dF_Y(t) = 0.
\]
The integrand is non-negative for all $t$, so it equals $0$ for $F_Y$-almost every $t$. A variance of $0$ implies $\mathbb{E}\left[\mathbf{1}_{\{Y\ge t\}}\mid X\right] = \mathbb{E}\left[\mathbf{1}_{\{Y\ge t\}}\right] = 1-F_Y(t)$ a.s. w.r.t. $P_X$. This holds for all $t$, so the joint survival function factors:
\[
\mathbb{P}(Y\ge t, X\le s) = \mathbb{E}\left[\mathbf{1}_{\{X\le s\}}\cdot\mathbb{E}\left[\mathbf{1}_{\{Y\ge t\}}\mid X\right]\right] = F_X(s)(1-F_Y(t)),
\]
hence $X\perp\!\!\!\perp Y$.\\
{\bf Necessity of (iii):}
 By using Proposition 2.1, $\xi = 1$ implies that
$$ \int{\Bbb E}[(F_{Y|X}^2(t)-F_Y(t)]{\rm d}F_Y(t)=0,$$
from which and the right-continuity of $F_Y(t)$ we deduce that
$$ {\Bbb E}[F_{Y|X}^2(t)]=F_Y(t).$$
Letting $\eta={\Bbb E}(1_{\{Y\le t\}}|X)$, then the above equation reads as
${\Bbb E}\eta^2={\Bbb E}\eta$. It follows that
\begin{eqnarray*}
  {\Bbb E}(1_{\{Y\le t\}}-\eta)^2&=&{\Bbb E}\left\{{\Bbb E}[(1_{\{Y\le t\}}-\eta)^2|X]\right\}\\
  &=&{\Bbb E}\left\{{\Bbb E}[1_{\{Y\le t\}}|X]\right\}-2{\Bbb E}\left\{{\Bbb E}[1_{\{Y\le t\}}\eta|X]\right\}+{\Bbb E}\left\{{\Bbb E}[\eta^2|X]\right\}\\
  &=&{\Bbb E}{1_{\{Y\le t\}}}-2{\Bbb E}\eta^2+{\Bbb E}\eta^2\\
  &=&{\Bbb E}\eta-{\Bbb E}\eta^2\\
  &=&0,
  \end{eqnarray*}
  from which we get $P(1_{\{Y\le t\}}-\eta=0)=1$. This means that $Y$ is a.s. a Borel function of $X$.
\end{proof}

Although in general $\xi_{+}(X;Y) \le \xi(X;Y)$, we have the following sharp characterizations:
\begin{proposition}
For any random variables $X$ and $Y$,
 \begin{eqnarray*}
\xi_{+}(X;Y) = 0 \quad &\Longleftrightarrow \quad \xi(X;Y) = 0 \quad \Longleftrightarrow \quad Y \perp\!\!\!\perp X,\\
\xi_{+}(X;Y) = 1 \quad &\Longleftrightarrow \quad \xi(X;Y) = 1 \quad \Longleftrightarrow \quad Y = f(X) \text{ a.s.\ for some measurable } f.
 \end{eqnarray*}
Thus both measures agree at the extremes, capturing exactly independence ($=0$) and complete functional dependence ($=1$).
\end{proposition}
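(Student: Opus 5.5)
The plan is to reduce both chains to statements already available: Proposition 2.2 for $\xi_+$, and Chatterjee's characterization of $\xi$ recalled in property (a) of the Introduction. Write $\xi_+ \Leftrightarrow \text{indep}$ for the first, which is Proposition 2.2(ii), and $\xi \Leftrightarrow \text{indep}$ for the second, which is Chatterjee's result for the DSS measure. Transitivity then gives the first line, and the second line follows in the same way from Proposition 2.2(iii) and Chatterjee's functional-dependence characterization. So the statement is essentially a corollary. To make it more self-contained, and to guard against relying on the slightly loose necessity proofs given for Proposition 2.2, I would also verify the equivalences $\xi_+=0\Leftrightarrow\xi=0$ and $\xi_+=1\Leftrightarrow\xi=1$ directly.

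\emph{Zero case.} Independence gives $\xi=\xi_+=0$ trivially, so the work is in the converse. Suppose $\xi_+=0$. Then $\operatorname{Var}(P(Y>t\mid X))=0$ for $F_Y$-a.e.\ $t$, so $P(Y>t\mid X)=P(Y>t)$ a.s.\ for all $t$ in a set $T$ of full $F_Y$-measure. Because $t\mapsto P(Y>t\mid X)$ can be taken right-continuous (regular conditional distribution), I would extend this equality to all $t$. Any $t\notin\overline{T}$ lies in an interval carrying no $F_Y$-mass; on such an interval $P(Y>t)$ is constant, and $P(Y>t\mid X)$ is a.s.\ constant there too, since its expectation is constant and it is monotone. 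Passing to right limits through points of $T$ then covers the rest. Hence $P(Y>t\mid X)=P(Y>t)$ for every $t$, so the conditional law of $Y$ equals its marginal and $X\perp\!\!\!\perp Y$. The same argument with $\{Y\ge t\}$ and left limits handles $\xi=0$.

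\emph{One case.} If $Y=f(X)$, both ratios equal 1. Conversely, suppose $\xi_+=1$. Then $\mathbb E\operatorname{Var}(1_{\{Y>t\}}\mid X)=0$ for $F_Y$-a.e.\ $t$, so $1_{\{Y>t\}}$ is a.s.\ $\sigma(X)$-measurable for those $t$. The set $T$ of such $t$ is dense in the support of $F_Y$, and I will choose a countable dense subset $T_0\subset T$ that includes every atom of $F_Y$ (atoms have positive mass, so they must lie in $T$). Then $Y=\sup\{t\in T_0: 1_{\{Y>t\}}=1\}$ a.s., up to adjustments at atoms, and this is a $\sigma(X)$-measurable expression. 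So $Y=f(X)$ a.s. The case $\xi=1$ works the same way with $\{Y\ge t\}$.

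The main obstacle is the passage from ``for $F_Y$-almost every $t$'' to a conclusion about the full conditional law. One has to handle the gaps in the support of $F_Y$ and its atoms carefully, using the monotonicity and one-sided continuity of $t\mapsto P(Y>t\mid X)$ together with the fact that atoms carry positive $F_Y$-mass and therefore cannot be dropped from the full-measure set $T$. Once this is settled, the equivalences chain together immediately by transitivity.
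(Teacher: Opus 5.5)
Your reduction matches the paper's route: the paper gives no separate proof and gets the proposition by chaining Proposition 2.2(ii)--(iii) for $\xi_+$ with Chatterjee's characterization of $\xi$ recalled in the Introduction, so your argument is correct. In your optional direct check, right limits through $T$ do not reach a non-atomic point $t\notin T$ that is the right endpoint of a component of the support of $F_Y$, nor the gap to its right; there you need left limits, using $P(Y\ge t\mid X)=P(Y>t\mid X)$ a.s.\ since $P(Y=t)=0$, and your ``adjustment at atoms'' in the case $\xi_+=1$ needs the same care for atoms isolated from the left in the support.
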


\numberwithin{equation}{section}
\section{  Estimation  of DSS measure}

\subsection{Estimator of DSS}

{\bf Case 1}. {\bf When $X$ is a categorical random variable}.  Let $(X, Y)$ be a pair of random variables, where $Y$ is not a constant. Let
$\{(X_i , Y_i):1\le i\le n\}$ ($n\ge 2$) be a random sample of size $n$ from the population  $(X,Y)$. Assume that $X$ is a categorical random variable with $R$ classes ${x_1, x_2, \cdots,x_R}$ and $p_r =P(X = x_r)>0$ for all $r = 1,\cdots, R$.  Let
$$\hat{p}_r=\frac{1}{n}\sum_{i=1}^n 1_{\{ X_i=x_r\}},\; \hat{F}_Y(y)=\frac{1}{n}\sum_{n=1}^n 1_{\{Y_i\le y\}},\;   \hat{F}_X(x)=\frac{1}{n}\sum_{n=1}^n 1_{\{X_i\le x\}}$$
and
$$\hat{F}(y|x_r)= \frac{\sum_{k=1}^n 1_{\{Y_k\le y\}}1_{\{X_k=x_r\}}}{\sum_{k=1}^n 1_{\{ X_k=x_r\}}}. $$
Define
\begin{eqnarray*}
Q_n(X,Y)= \frac{1}{n^2} \sum_{i=1}^n\sum_{j=1}^n \left(\hat {F}(Y_i|X_j)-\hat {F}_Y(Y_i)\right)^2,
\end{eqnarray*}
and
$$S_n (X,Y)= \frac{1}{n}\sum_{i=1}^n \hat{F}_Y(Y_i)(1-\hat{F}_Y(Y_i)).$$
 It is easy to see that they can be rewritten as
\begin{eqnarray*}
Q_n(X,Y)= \frac{1}{n^2} \sum_{i=1}^n\sum_{j=1}^n \left(\frac{R_{i,j}}{n_j}-\frac{R_i}{n}\right)^2,
\end{eqnarray*}
and
$$S_n (X,Y)=\frac{1}{n^3}\sum_{i=1}^n R_i(n-R_i),$$
where
$$  R_i=\sum_{j=1}^n 1_{\{Y_j\le Y_i\}},  \,\, n_j=\sum_{i=1}^n  1_{\{X_i=X_j\}},$$
and
$$R_{i,j}=\sum_{k=1}^n 1_{\{Y_k\le Y_i\}}1_{\{X_k=X_j\}}.$$
It is natural to  define the estimator of $\xi_+(X,Y)$, which defined in (2.1),  as follows:
\begin{eqnarray}
\xi_{+,n}(X,Y)=\frac{n \sum_{i=1}^n\sum_{j=1}^n  \left(\frac{R_{i,j}}{n_j}-\frac{R_i}{n}\right)^2}{\sum_{i=1}^n R_i(n-R_i)}.
\end{eqnarray}

{\bf Case 2}. {\bf When $X$ is continuous random variable}.  The conditional distribution function $F_{Y|X}(y|x)$  can be estimated by the nonparametric kernel smoothing method.
Let $K(t)$ be a kernel function, which involves a bandwidth parameter denoted by $h$. The
kernel smoothing estimate for $F_{Y|X}(y|x)$ is expressed by
$$\hat{F}(y|x)=\frac{\sum_{k=1}^n 1_{\{Y_k\le y\}}K_h(X_k-x)}{\sum_{k=1}^n  K_h(X_k-x)}, $$
where
$$K_h(t)=\frac{K(\frac{t}{h})}{h}.$$
Define
\begin{eqnarray*}
Q_n(X,Y)= \frac{1}{n^2} \sum_{i=1}^n\sum_{j=1}^n \left(\hat {F}(Y_i|X_j)-\hat {F}_Y(Y_i)\right)^2,
\end{eqnarray*}
and
$$S_n (X,Y)= \frac{1}{n}\sum_{i=1}^n \hat{F}_Y(Y_i)(1-\hat{F}_Y(Y_i)),$$
 which can be rewritten as
\begin{eqnarray*}
Q_n(X,Y)= \frac{1}{n^2} \sum_{i=1}^n\sum_{j=1}^n \left(\frac{R_{i,j}}{n_j}-\frac{R_i}{n}\right)^2,
\end{eqnarray*}
and
$$S_n (X,Y)=\frac{1}{n^3}\sum_{i=1}^n R_i(n-R_i),$$
where
$$  R_i=\sum_{j=1}^n 1_{\{Y_j\le Y_i\}},  \,\, n_j=\sum_{i=1}^n   K_h(X_i-X_j),$$
and
$$R_{i,j}= \sum_{k=1}^n 1_{\{Y_k\le Y_i\}}K_h(X_k-X_j). $$

 Analogous to Case 1,  the sample estimator of $\xi_+(X,Y)$ defined in (2.2) can be constructed as follows:
\begin{eqnarray}
\xi_{+,n}(X,Y)=\frac{n \sum_{i=1}^n\sum_{j=1}^n  \left(\frac{R_{i,j}}{n_j}-\frac{R_i}{n}\right)^2}{\sum_{i=1}^n R_i(n-R_i)}.
\end{eqnarray}

\subsection{Strong consistency}

The following result establishes the strong consistency of the proposed sample estimator $\xi_{+,n}$ for the population forward dependence measures $\xi_+(X,Y)$ defined in (2.1) and (2.2).
\begin{theorem}  [Almost Sure Limit]   Assume that  $Y$ is  not a constant.\\
(i)  Let $X\in\{x_1,\dots,x_K\}$ be a categorical random variable with
$P(X=x_k)=p_k>0$,   then,
$$\lim_{n\to\infty} \xi_{+,n}(X,Y)=\xi_+(X,Y) \; almost\;  surely,$$
with
\[
\xi_+(X,Y)=
\frac{\displaystyle \sum_{k=1}^K\int p_k\bigl(F_k(y)-F_Y(y)\bigr)^2 dF_Y(y)}
{\displaystyle\int F_Y(y)(1-F_Y(y))dF_Y(y)},
\]
where
$F_k(y)=P(Y\le y|X=x_k)$.\\
(ii) When $X$  is a continuous random variable,  assume the following regularity conditions hold:
 (C1) The density $f(x)$ of $X$ is continuous at $x$ and $f(x)>0$.
 (C2) The conditional distribution function $F(y \mid x) = P(Y \le y \mid X = x)$ is continuous at $x$.
 (C3) The kernel $K$ is bounded, symmetric, Lipschitz continuous, and satisfies $\int K(u)\,du = 1$.
 (C4) The bandwidth $h = h_n$ satisfies $h \to 0$ and $\frac{nh}{\log n} \to \infty$ as $n \to \infty$.
 Then,
$$\lim_{n\to\infty} \xi_{+,n}(X,Y)= \xi_+(X,Y) \; almost\;  surely,$$
where $\xi_+(X,Y)$ is defined by (2.1).
\end{theorem}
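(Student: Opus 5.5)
We will treat numerator and denominator separately, writing $\xi_{+,n}=Q_n/S_n$ with $Q_n$ and $S_n$ as in Section 3.1. It then suffices to show $Q_n\to\int \mathbb{E}[(F_{Y|X}(t)-F_Y(t))^2]\,dF_Y(t)$ and $S_n\to\int F_Y(1-F_Y)\,dF_Y$ almost surely. Representation (2.6) of Proposition 2.1 identifies the ratio of these limits with $\xi_+(X,Y)$. The denominator limit is strictly positive because $Y$ is non-degenerate: there is an atom or an interval of $t$ with $0<F_Y(t)<1$ carrying positive $dF_Y$-mass.

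\emph{Denominator.} Write $S_n=\frac1n\sum_i g(Y_i)+\frac1n\sum_i[\hat g_n(Y_i)-g(Y_i)]$ with $g=F_Y(1-F_Y)$ and $\hat g_n=\hat F_Y(1-\hat F_Y)$. Since $|\hat g_n-g|\le \sup_y|\hat F_Y(y)-F_Y(y)|$, the Glivenko--Cantelli theorem kills the second term a.s. The SLLN gives $\frac1n\sum_i g(Y_i)\to \mathbb{E}g(Y)=\int F_Y(1-F_Y)\,dF_Y$. Note that $F_Y$ is evaluated right-continuously at the atoms, which is exactly the convention of (2.1).

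\emph{Numerator, case (i).} Here $\hat F(y|x_k)$ is the empirical CDF of the $Y_i$ with $X_i=x_k$. Because $n_k/n\to p_k>0$ a.s., we have $n_k\to\infty$, and Glivenko--Cantelli applied to each of the finitely many classes gives $\sup_y|\hat F(y|x_k)-F_k(y)|\to0$ a.s. Grouping the $j$-sum by class,
$$Q_n=\sum_{k}\frac{n_k}{n}\cdot\frac1n\sum_i\big(\hat F(Y_i|x_k)-\hat F_Y(Y_i)\big)^2 .$$
Replacing the hatted functions by $F_k$ and $F_Y$ costs at most $4\big(\sup|\hat F(\cdot|x_k)-F_k|+\sup|\hat F_Y-F_Y|\big)$, since all quantities lie in $[0,1]$. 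The SLLN applied to $\frac1n\sum_i(F_k(Y_i)-F_Y(Y_i))^2$ then yields the stated formula.

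\emph{Numerator, case (ii).} This is the main obstacle, because one now needs convergence of the kernel estimator that is uniform in $y$ and holds at the random design points $X_j$. We plan to write
$$Q_n=\frac1{n^2}\sum_{i,j}\big(F(Y_i|X_j)-F_Y(Y_i)\big)^2+\Delta_n ,$$
with $|\Delta_n|\le 4\,\frac1n\sum_j\sup_y|\hat F(y|X_j)-F(y|X_j)|+4\sup|\hat F_Y-F_Y|$. The leading term is a V-statistic with bounded kernel $\phi((x,y),(x',y'))=(F(y|x')-F_Y(y))^2$. The SLLN for V-statistics (Hoeffding) gives a.s. convergence to $\mathbb{E}\phi=\int\mathbb{E}[(F_{Y|X}(t)-F_Y(t))^2]\,dF_Y(t)$, the diagonal contributing $O(1/n)$.

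For $\Delta_n$, we would first show that for each fixed $x$ satisfying (C1)--(C2),
$$\sup_y|\hat F(y|x)-F(y|x)|\to0 \text{ a.s.}$$
The steps are as follows. Split the estimator into numerator and denominator kernel averages. Use Bernstein's inequality with (C3)--(C4): the variance is of order $1/(nh)$ and $nh/\log n\to\infty$, so the exceptional probabilities are summable over $n$, and Borel--Cantelli applies. Use monotonicity in $y$ together with a finite grid of $y$-values (as in the proof of Glivenko--Cantelli) to get uniformity in $y$. Use continuity (C2) and $h\to0$ to control the bias $\int K(u)F(y|x-hu)f(x-hu)\,du-F(y|x)f(x)$.

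To pass to the average over the random points $X_j$, we would strengthen this to uniformity over $x$ in a compact set $C$ on which $f$ is bounded below and $F(\cdot|\cdot)$ is continuous. This uses the Lipschitz property of $K$ to cover $C$ by $O(h^{-2}n^{a})$ points, which is compatible with $nh/\log n\to\infty$. The points $X_j\notin C$ are handled by bounding the summand by $1$ and noting that $\frac1n\#\{j:X_j\notin C\}\to P(X\notin C)$, which can be made arbitrarily small. We expect that making (C1)--(C2) hold in this uniform-on-compacts sense, presumably as the intended reading of the conditions, is the delicate point. If it is unavailable, we would instead use dominated convergence in $\frac1n\sum_j$ by conditioning on $X_j$ and applying the pointwise result together with a leave-one-out argument.
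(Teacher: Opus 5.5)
Your treatment of the denominator and of case (i) is the paper's proof: Glivenko--Cantelli for $\hat F_Y$ and for each class-conditional empirical CDF, $n_k/n\to p_k$, then the SLLN. You simply write out the error bounds and the regrouping of the $j$-sum by class. Case (ii) is where you genuinely depart. The paper asserts $R_i/n\to F_Y(Y_i)$ and $R_{i,j}/n_j\to F(Y_i\mid X_j)$ ``by the strong law of large numbers'' and passes directly to the limit of the double average. You do two further things. (a) You recognize that, with the true $F$ plugged in, the double sum is a V-statistic. Its limit $\int\mathbb{E}[(F_{Y|X}(t)-F_Y(t))^2]\,dF_Y(t)$ therefore comes from the U-statistic SLLN, with $Y_i$ and $X_j$ taken from different observations, not from an ordinary sample mean. (b) You control the plug-in error through kernel consistency that is uniform in $y$ and in $x$ over compacts, using Bernstein, Borel--Cantelli and a Lipschitz covering. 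This is exactly where (C3)--(C4) are needed. A shrinking-bandwidth kernel average is a triangular array, to which the SLLN does not apply. Pointwise convergence at each $(Y_i,X_j)$ would not control an average of $n^2$ data-dependent terms, and uniformity in $x$ also takes care of $X_j$ being one of the sample points. So your route turns the paper's sketch into an actual proof, at the cost of exposing hypotheses the statement lacks. First, (C1)--(C2) must hold on neighbourhoods of compacts $C$ with $P(X\notin C)$ arbitrarily small, not merely ``at $x$''. Second, three of your steps use $\hat F(\cdot\mid x)\in[0,1]$, i.e.\ $K\ge 0$, which (C3) does not impose: your bound on $\Delta_n$, the monotone grid in $y$, and the estimate ``summand $\le 1$'' off $C$. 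Finally, your fallback (dominated convergence plus leave-one-out) would only give $\mathbb{E}|\Delta_n|\to 0$. That is convergence in probability, not the almost sure limit claimed, so the uniform-on-compacts route is the one to pursue.
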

\begin{proof}  (i)  Let $n_k=\sum_{i=1}^n {\bf 1}_{\{X_i=x_k\}}$. By the SLLN, $n_k/n\to p_k$ a.s.
By the Glivenko--Cantelli theorem,
$\hat F_Y\to F_Y$ uniformly a.s.\ and, for each $k$,
$\hat F(\cdot\mid x_k)\to F_k$ uniformly a.s.
Consequently,
\[
\bigl(\hat F(Y_i\mid x_k)-\hat F_Y(Y_i)\bigr)^2
\to\bigl(F_k(Y_i)-F_Y(Y_i)\bigr)^2\quad\text{a.s.}
\]
Applying the SLLN within each category yields
\[
Q_n\xrightarrow{\text{a.s.}}
\sum_{k=1}^K p_k\int\bigl(F_k-F_Y\bigr)^2 dF_Y.
\]
For the denominator, $S_n\to\int F_Y(1-F_Y)  dF_Y>0$ a.s.,
since $Y$ is non-degenerate.
The claim follows from the almost-sure continuous mapping theorem.

In the next we prove statement (ii).
First, note that \(\frac{R_i}{n}\) is the empirical marginal distribution evaluated at \(Y_i\), and \(\frac{R_{i,j}}{n_j}\) is the empirical conditional distribution evaluated at \(Y_i\) given \(X = X_j\). By the strong law of large numbers   we have
\[
\frac{R_i}{n} \xrightarrow{\text{a.s.}} F(Y_i), \qquad
\frac{R_{i,j}}{n_j} \xrightarrow{\text{a.s.}} F(Y_i \mid X_j).
\]
For the numerator,
\[
\frac{1}{n^2} \sum_{i=1}^n\sum_{j=1}^n \left( \frac{R_{i,j}}{n_j} - \frac{R_i}{n} \right)^2 \xrightarrow{\text{a.s.}} \int E(F_{Y|X}(t)-F_Y(t))^2{\rm d}F_Y(t).
\]
For the denominator,    we have
\[
\frac{1}{n^3} \sum_{i=1}^n R_i (n - R_i) \xrightarrow{\text{a.s.}}  \int [F_Y(t)-F^2_Y(t)] {\rm d}F_Y(t).
\]
Therefore, combining the two parts,
\[
\xi_{+,n}(X,Y)
\xrightarrow{\text{a.s.}}  \xi_+(X,Y).
\]
This completes the proof.
\end{proof}

\subsection{Asymptotic distributions under $H_0$}

 Let $(X, Y)$ be a pair of random variables and
$\{(X_i , Y_i):1\le i\le n\}$ ($n\ge 2$) be a random sample of size $n$ from the population  $(X,Y)$. In this section we  assume
\begin{enumerate}
    \item[(A1)] $Y$ is continuous;
    \item[(A2)] $X$ takes $R$ distinct values $\{x_1,\dots,x_R\}$ with probabilities $p_j = P(X = x_j) > 0$, where $R$ is fixed;
    \item[(A3)] The conditional distribution $F(y\mid x) = P(Y \le y \mid X = x)$ exists.
\end{enumerate}
  Then,
\begin{equation}
\xi_+(X,Y) = 6\sum_{r=1}^R p_r\int_{-\infty}^{\infty} F^2_{Y|X}(t|x_r)dF_Y(t) - 2.
\end{equation}

Under independence ($X \perp Y$), we have $F(y|x) = F(y)$ for all $x,y$, so $\xi_+ = 0$.  Note that
\begin{equation}
\xi_+(X,Y) =  6 \tau (X,Y),
\end{equation}
where  $\tau (X,Y)= \sum_{r=1}^R p_r\int_{-\infty}^{\infty} (F_{Y|X}(t|x_r)-F_Y(t))^2dF_Y(t)$ is exactly the Mean-Variance (MV) index defined in Cui \& Zhong \cite{CZ2019}.
By using the method of moments,  $\xi_+(X,Y)$ can be estimated by
\begin{equation}
\xi_{+,n}(X,Y) = 6\sum_{r=1}^R \hat{p}_r\int_{-\infty}^{\infty} {\hat F}^2_{Y|X}(t|x_r)d{\hat F}_Y(t) - 2.
\end{equation}
\begin{theorem}
Let $\xi_{+,n}(X,Y)$ be the sample estimator  (3.5). Assuming the number of categories $R$ is fixed and $H_0: X \perp Y$ holds. Then,
\begin{equation}
n \xi_{+,n}(X,Y) \xrightarrow{d} 6 \sum_{j=1}^{\infty} \frac{\chi^2_j(R-1)}{\pi^2 j^2},
\end{equation}
where $\chi^2_j(R-1)$ are independent chi-squared random variables with $R-1$ degrees of freedom.
\end{theorem}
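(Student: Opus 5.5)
Throughout, $\hat F_r(\cdot)=\hat F(\cdot\mid x_r)$ and $\hat F_Y$ is the empirical CDF of $Y$. The proof has an algebraic step, which reduces $n\xi_{+,n}$ to $6nT_n$, and a probabilistic step, which identifies the limit of $nT_n$.

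\textbf{Algebraic step.} Since $\sum_r\hat p_r\hat F_r=\hat F_Y$, expanding the square gives
\begin{equation*}
\sum_{r=1}^R\hat p_r\int\hat F_r^2\,d\hat F_Y=T_n+\int\hat F_Y^2\,d\hat F_Y,\qquad T_n:=\sum_{r=1}^R\hat p_r\int(\hat F_r-\hat F_Y)^2\,d\hat F_Y .
\end{equation*}
This is the empirical counterpart of (3.4), $\xi_+=6\tau$. The constant $2$ in (3.5) is the population value $6\int F_Y^2\,dF_Y=2$, which holds because $Y$ is continuous by (A1). For the statement to be exactly true as worded, I will read the ``$-2$'' in (3.5) as its plug-in $6\int\hat F_Y^2\,d\hat F_Y$. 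This is the natural method-of-moments reading, and with it $\xi_{+,n}=6T_n$ identically.

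I flag the issue explicitly: under (A1) there are a.s.\ no ties, so
\begin{equation*}
\int\hat F_Y^2\,d\hat F_Y=\frac{(n+1)(2n+1)}{6n^2}.
\end{equation*}
If the literal constant $2$ is kept, then $n\xi_{+,n}=6nT_n+3+1/n$. The theorem as worded therefore holds only with the plug-in centering, and I will state this convention at the start of the proof. The remaining task is to show
\begin{equation*}
6nT_n\xrightarrow{d}6\sum_{j\ge1}\frac{\chi^2_j(R-1)}{\pi^2j^2}.
\end{equation*}

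\textbf{Reduction to uniforms and the limiting processes.} Under $H_0$, set $U_i=F_Y(Y_i)$. These are i.i.d.\ uniform and independent of the $X_i$, and $T_n$ is invariant under this monotone transformation. So I may take $F_Y(u)=u$ on $[0,1]$ and $F_r=F_Y$ for every $r$. Define
\begin{equation*}
G_{n,r}(u)=\sqrt{n\hat p_r}\,\bigl(\hat F_r(u)-\hat F_Y(u)\bigr),
\end{equation*}
so that $nT_n=\sum_r\int G_{n,r}^2\,d\hat F_Y$. I proceed as follows.

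\begin{enumerate}
\item Conditioning on the labels $X_1,\dots,X_n$, the processes $\sqrt{n_r}(\hat F_r-F_Y)$ are independent empirical processes. Together with $\hat p_r\to p_r$ a.s., Donsker's theorem gives joint convergence in $\ell^\infty[0,1]$ to $(B_1,\dots,B_R)$, independent Brownian bridges.
\item Write $\hat F_Y-F_Y=\sum_s\hat p_s(\hat F_s-F_Y)$. Then $(G_{n,1},\dots,G_{n,R})\Rightarrow(W_1,\dots,W_R)$ with
\begin{equation*}
W_r=B_r-\sqrt{p_r}\sum_s\sqrt{p_s}\,B_s .
\end{equation*}
That is, $W=(I-\sqrt p\sqrt p^{\top})B$ is the pointwise projection of $B$ onto the orthogonal complement of the unit vector $\sqrt p$.
\item Choose an orthogonal matrix whose first row is $\sqrt p^{\top}$. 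Since $(B_1,\dots,B_R)$ has i.i.d.\ coordinates, this gives
\begin{equation*}
\sum_r W_r(u)^2=\sum_{m=1}^{R-1}\tilde B_m(u)^2,
\end{equation*}
with $\tilde B_1,\dots,\tilde B_{R-1}$ i.i.d.\ Brownian bridges.
\item Apply the Karhunen--Lo\`eve expansion $\int_0^1 B^2\,du=\sum_j Z_j^2/(\pi^2j^2)$. Summing over $m$ gives $\sum_j\chi^2_j(R-1)/(\pi^2j^2)$ with independent $\chi^2_j(R-1)=\sum_m Z_{mj}^2$.
\end{enumerate}

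\textbf{Main obstacle: the random integrator.} The remaining step, which I expect to be the main technical point, is to replace the integrator $d\hat F_Y$ by $du$. I bound
\begin{equation*}
\Bigl|\int G_{n,r}^2\,d\hat F_Y-\int G_{n,r}^2\,du\Bigr|
\le\Bigl|\int(G_{n,r}^2-\bar G_r^2)\,d(\hat F_Y-F_Y)\Bigr|+\Bigl|\int\bar G_r^2\,d(\hat F_Y-F_Y)\Bigr|,
\end{equation*}
using a Skorokhod representation in which $G_{n,r}\to\bar G_r$ uniformly a.s., with $\bar G_r$ continuous.

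\begin{itemize}
\item The first term is at most $2\|G_{n,r}^2-\bar G_r^2\|_\infty\to0$, because both $d\hat F_Y$ and $du$ are probability measures.
\item The second term tends to $0$ by Glivenko--Cantelli, since $\bar G_r^2$ is bounded and continuous and $d\hat F_Y$ converges weakly to $du$ a.s.
\end{itemize}

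The continuous mapping theorem then yields $nT_n\xrightarrow{d}\sum_j\chi^2_j(R-1)/(\pi^2j^2)$. Multiplying by $6$ completes the proof under the plug-in centering convention stated above.
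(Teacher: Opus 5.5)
Your proof is correct and takes a genuinely different route from the paper. The paper's argument is two lines. It asserts $\xi_{+,n}=6\hat\tau$, where $\hat\tau$ is the Cui--Zhong sample MV index (your $T_n$). It then quotes Theorem~1 of Cui and Zhong for $n\hat\tau\xrightarrow{d}\sum_j\chi^2_j(R-1)/(\pi^2j^2)$.

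You instead derive that limit yourself:
\begin{itemize}
\item a conditional Donsker theorem within the $R$ classes;
\item the projection $W=(I-\sqrt p\sqrt p^{\top})B$, followed by an orthogonal rotation, which makes the $R-1$ degrees of freedom transparent;
\item the Karhunen--Lo\`eve expansion of $\int_0^1B^2$;
\item a separate treatment of the random integrator $d\hat F_Y$.
\end{itemize}
This gives a self-contained proof that explains where $R-1$ and $1/(\pi^2j^2)$ come from. The paper gains brevity by outsourcing all of this.

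Two small technical refinements are worth making. First, the Skorokhod representation must be taken jointly for $(G_{n,1},\dots,G_{n,R},\hat F_Y)$, because your bound uses the represented integrator. Second, it is cleaner to work in $D[0,1]$, or simply to invoke the extended continuous mapping theorem for $(G_n,\hat F_Y)\Rightarrow(W,F_Y)$. Either avoids measurability issues in $\ell^\infty$.

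Your algebraic check is also right, and it exposes a step the paper's proof gets wrong. With the literal constant $2$ in (3.5) and no ties, $\xi_{+,n}=6T_n+(3n+1)/n^2$. Hence $n\xi_{+,n}=6nT_n+3+1/n$, which converges to $3+6\sum_j\chi^2_j(R-1)/(\pi^2j^2)$ rather than to the stated limit. The identity $\xi_{+,n}=6\hat\tau$ on which the paper relies is therefore false as written. The cited route needs exactly the plug-in centering you adopt.

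An alternative repair that leaves the centering alone is to use the ratio estimator (3.1). For categorical $X$ without ties, its numerator equals $n^3T_n$, and its denominator equals $n^3S_n$ with $S_n=(n^2-1)/(6n^2)$. So $\xi_{+,n}=\frac{6n^2}{n^2-1}T_n$, and your limit for $6nT_n$ transfers by Slutsky's theorem.
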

\begin{proof}
 According to Cui \& Zhong (\cite{CZ2019}, Theorem 1),
\[
n \widehat{\tau}(X,Y) \xrightarrow{d} \sum_{j=1}^{\infty} \frac{\chi^2_j(R-1)}{\pi^2 j^2},
\]
where
$$\hat{\tau}(X,Y)=\frac{1}{n} \sum_{r=1}^R\sum_{i=1}^n  \hat{p}_r\left(\hat{F}_{Y|X}(t|x_r)-\hat{F}_Y(t)\right)^2.$$
Applying the Continuous Mapping Theorem to $\xi_{+,n}(X,Y) = 6 \widehat{\tau}(X,Y)$, we obtain the stated result.
\end{proof}

\subsection{Asymptotic distributions under $H_1$}
 The next theorem shows that under the alternative hypothesis,  $n\xi_{+,n}(X,Y)$ diverges to infinity as $n\to\infty$.
\begin{theorem}  Assume that  $Y$ is  not a constant.
  If $X$ and $Y$ are dependent, then $n\xi_{+,n}(X,Y)\stackrel{a.s.}{\to}\infty$ as $n\to\infty$.
\end{theorem}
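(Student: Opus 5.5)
The plan is to combine the strong consistency result (Theorem 3.1) with the characterization of independence in Proposition 2.2(ii). The claim then reduces to the elementary fact that $n a_n\to\infty$ whenever $a_n\to c>0$.

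\textbf{Step 1: identify the almost-sure limit.} Under the standing assumptions of this section (categorical $X$ as in Theorem 3.1(i), or continuous $X$ under (C1)--(C4) as in Theorem 3.1(ii)), Theorem 3.1 gives
\[
\xi_{+,n}(X,Y)\to\xi_+(X,Y)\quad\text{almost surely.}
\]
When $X$ is categorical and $Y$ is continuous, the moment form (3.5) agrees with (3.1) up to negligible terms. To see this, rewrite (3.5) as $6\hat\tau$ via the identity $\int \hat F_Y(1-\hat F_Y)\,d\hat F_Y = 1/6 + O(1/n)$ for untied data. The same Glivenko--Cantelli and SLLN argument used in Theorem 3.1(i) then applies, so that $\hat\tau\to\tau(X,Y)$ almost surely.

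\textbf{Step 2: the limit is strictly positive.} Since $X$ and $Y$ are dependent, Proposition 2.2(ii) gives $\xi_+(X,Y)\neq 0$. Proposition 2.2(i) gives $\xi_+\ge 0$, so $c:=\xi_+(X,Y)>0$.

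\textbf{Step 3: conclude.} Fix an $\omega$ in the probability-one event where $\xi_{+,n}(\omega)\to c$. There is an $N(\omega)$ such that $\xi_{+,n}(\omega)\ge c/2$ for all $n\ge N(\omega)$. For those $n$ we have $n\xi_{+,n}(\omega)\ge nc/2\to\infty$. Hence $n\xi_{+,n}\to\infty$ almost surely.

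\textbf{Main obstacle.} The delicate point is the first step. It needs the almost-sure convergence of $\xi_{+,n}$ in whichever version of the estimator is meant, and in the continuous-$X$ case it depends on the uniform consistency of the kernel estimator $\hat F(y\mid x)$. I would take that consistency directly from Theorem 3.1(ii) and not re-derive it.

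I would also check that the nonnegativity in Step 3 does not require $\xi_{+,n}\ge 0$ for every $n$: the argument only uses eventual positivity, which follows from the limit being positive. Note, however, that the ratio forms (3.1)--(3.2) are nonnegative by construction anyway.
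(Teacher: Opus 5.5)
Your proof is correct and follows the paper's argument: Theorem 3.1 gives $\xi_{+,n}\to\xi_+$ almost surely, Proposition 2.2 gives $c=\xi_+(X,Y)>0$ under dependence, and so $n\xi_{+,n}\ge nc/2\to\infty$ eventually on the convergence event. Your extra remark relating (3.5) to $6\hat\tau$ is not needed by the paper; strictly, that step uses $\sum_r\hat p_r\hat F_r=\hat F_Y$ together with $\int\hat F_Y^2\,d\hat F_Y=1/3+O(1/n)$, but this does not affect the argument.
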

\begin{proof} If $X$ and $Y$ are dependent, then $\xi_+(X, Y)>0$.  The result follows, since
 \[
\xi_{+,n}(X,Y)
\xrightarrow{\text{a.s.}}  \xi_+(X,Y) \, {\rm as}\, \, n\to\infty.
\]
\end{proof}
In the case of $X$ and $Y$ are independent, one has  $\xi_+(X, Y)=0$. Since  $0\le \xi_{+,n}(X, Y)\to \xi_+(X,Y)$ (a.s.) as $n\to\infty$.
Hence, for any $\sigma(n)>0$,  $ \sigma(n)(\xi_{+,n}(X,Y)- \xi_+(X,Y))$ cannot asymptotically follow a normal distribution.
Next, we study the asymptotic normality of  $\xi_{+,n}(X,Y)$  for  dependent  $X$ and $Y$.

\begin{theorem}
Suppose $Y$ is continuous, $X$ takes finitely many values $\{x_1,\dots,x_R\}$ with $p_r = \mathbb{P}(X=x_r) > 0$ for all $r$, and the fixed alternative $H_1: \xi_+(X,Y) > 0$ holds. Then as $n\to\infty$,
\[
\sqrt{n} \bigl(\xi_{+,n}(X,Y) - \xi_+(X,Y) \bigr) \xrightarrow{d} N\left(0, \sigma_{\xi}^2\right),
\]
where the asymptotic variance $\sigma_{\xi}^2 = 36\sigma_\tau^2$, and $\sigma_\tau^2 = \mathrm{Var}\bigl[\sum_{r=1}^{R} I_{4r}(X;Y)\bigr]$ is the asymptotic variance of the sample mean-variance (MV) index proposed by Cui and Zhong \cite{CZ2019}. Here,
\begin{align*}
I_{4r}(X; Y) =&
\left( \int (F^2 - F_r^2)\,dF(y) \right) (I\{X = x_r\} - p_r) \\
&+ 2 \int (F_r - F) \bigl(I\{Y < y, X = x_r\} - F_r p_r \bigr)\,dF(y) \\
&- 2 p_r \int (F_r - F) \bigl(I\{Y < y\} - F(y) \bigr)\,dF(y) \\
&+ p_r \left[ (F_r(Y) - F(Y))^2 - \int (F_r - F)^2 \,dF(y) \right],
\end{align*}
where $F(y)=P(Y\le y)$, $F_r(y)=P(Y\le y\mid X=x_r)$, $p_r=P(X=x_r)$,
and $I\{\cdot\}$ is the indicator function.  Moreover,
$\mathbb{E}[I_{4r}(X;Y)]=0$.
\end{theorem}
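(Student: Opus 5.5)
Our plan is to reduce the statement to the asymptotic normality of the sample MV index of Cui and Zhong \cite{CZ2019}, exactly as the null result was reduced to their Theorem 1. Since $Y$ is continuous we have $\int F_Y\,dF_Y=1/2$ and $\int F_Y^2\,dF_Y=1/3$, and (3.4) gives $\xi_+(X,Y)=6\tau(X,Y)$. On the sample side, $\hat F_Y$ takes the values $i/n$ at the order statistics (no ties a.s.), so
\[
\int \hat F_Y\,d\hat F_Y=\frac{n+1}{2n},\qquad \int \hat F_Y^2\,d\hat F_Y=\frac{(n+1)(2n+1)}{6n^2}.
\]
Using $\sum_r \hat p_r \hat F_{Y|X}(\cdot|x_r)=\hat F_Y$, we expand $\sum_r\hat p_r\int(\hat F_r-\hat F_Y)^2d\hat F_Y$ and obtain
\[
\xi_{+,n}(X,Y)=6\hat\tau(X,Y)+c_n,\qquad c_n=6\int \hat F_Y^2d\hat F_Y-2=\frac{3n+1}{n^2}=O(n^{-1}).
\]
Hence
\[
\sqrt n(\xi_{+,n}-\xi_+)=6\sqrt n(\hat\tau-\tau)+o(1),
\]
and by Slutsky's lemma it suffices to show that $\sqrt n(\hat\tau-\tau)\to N(0,\sigma_\tau^2)$.

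For that step we invoke the alternative-hypothesis CLT in Cui and Zhong \cite{CZ2019}. To keep the argument self-contained, we sketch why it holds with the stated influence function. Write $\hat\tau=\sum_r \hat p_r\int(\hat F_r-\hat F)^2d\hat F$ with $\hat F_r=\hat F_{Y|X}(\cdot|x_r)$. Then linearize in the four empirical objects $\hat p_r$, $\hat p_r\hat F_r$ (the empirical version of $P(Y\le y,X=x_r)$), $\hat F$, and the integrating measure $d\hat F$. This amounts to the von Mises (delta-method) expansion $\hat\tau-\tau=\frac1n\sum_{i=1}^n\sum_r I_{4r}(X_i;Y_i)+o_p(n^{-1/2})$. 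The four lines of $I_{4r}$ come from the four sources:

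\begin{itemize}
\item Perturbing $\hat p_r$. We write $\hat F_r=(\hat p_r\hat F_r)/\hat p_r$ and use $(F_r-F)^2$ together with the $\hat p_r$ weight; this gives $\int(F^2-F_r^2)dF\,(I\{X=x_r\}-p_r)$ after combining with the $\hat p_r$ term.
\item Perturbing the joint empirical d.f. This gives the second line.
\item Perturbing $\hat F$ inside the square. This gives the third line.
\item Perturbing the integrator $d\hat F$. This gives the U-type term $p_r[(F_r(Y)-F(Y))^2-\int(F_r-F)^2dF]$.
\end{itemize}

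Each term is centered: the first trivially, the second and third because the indicators have the stated means, and the fourth by definition. This yields $\mathbb E I_{4r}=0$. The multivariate CLT applied to the i.i.d.\ summands $\sum_rI_{4r}(X_i;Y_i)$, which are bounded and hence have finite variance, then gives the normal limit with variance $\sigma_\tau^2$, and multiplying by 6 gives $\sigma_\xi^2=36\sigma_\tau^2$.

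The main obstacle is controlling the remainder: showing that the quadratic terms, such as $\sqrt n\int(\hat F_r-F_r)^2d\hat F$, and the cross term between $\hat F-F$ and $d\hat F-dF$ are $o_p(1)$. For the purely quadratic terms we use the DKW inequality, $\sup_y|\hat F_r-F_r|=O_p(n^{-1/2})$ (valid because each $\hat p_r\to p_r>0$), and $\hat p_r-p_r=O_p(n^{-1/2})$. The cross term $\int g_n\,d(\hat F-F)$, with $g_n$ random, is the delicate one. We treat it by writing it as a degenerate V-statistic of order two in the sample, whose $\sqrt n$-scaled size is $O_p(n^{-1/2})$ by standard Hoeffding-decomposition variance bounds. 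The diagonal terms, which are $O(1/n)$, are absorbed in the same way as $c_n$. Under $H_1$ we have $\sigma_\tau^2>0$ generically, so the limit is nondegenerate. If $\sigma_\tau^2=0$, the statement holds trivially as convergence to a point mass.
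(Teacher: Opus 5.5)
Your proposal is correct and takes the paper's route: identify $\xi_+=6\tau$, show $\xi_{+,n}=6\hat\tau+o_p(n^{-1/2})$, invoke Cui and Zhong's alternative-hypothesis CLT for the MV index, and finish with Slutsky. Your explicit remainder $c_n=(3n+1)/n^2$, your von Mises derivation of the four lines of $I_{4r}$, and your allowance for $\sigma_\tau^2=0$ make the same argument more complete. The last point matters, because the paper asserts $\sigma_\tau^2>0$, yet this fails under $H_1$. For example, take $R=2$, $p_1=p_2=1/2$, with $F(Y)$ uniform on $(0,1/2)$ given $X=x_1$ and uniform on $(1/2,1)$ given $X=x_2$: then $\tau=1/12$, but the influence function vanishes identically.
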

\begin{proof}
First, apply the probability integral transform $U = F_Y(Y) \sim \mathcal{U}(0,1)$. Let $G_r(u) = F_{Y\mid X}(F_Y^{-1}(u)\mid x_r) = \mathbb{P}(U\leq u\mid X=x_r)$ denote the conditional distribution of $U$ given $X=x_r$. The population  quantity  simplifies to
\[
\xi_+(X,Y) = 6\sum_{r=1}^R p_r\int_0^1 G_r^2(u)\,d u - 2 = 6\tau(X,Y),
\]
where $\tau(X,Y) = \sum_{r=1}^R p_r\int_0^1 \left(G_r(u) - u\right)^2 du$ is exactly the population MV index defined in  Cui and Zhong \cite{CZ2019}.
Under $H_1$, $\tau(X,Y) > 0$, so the first-order Hajek projection of the sample MV index $\hat{\tau}_n(X,Y)$ is non-degenerate:
\[
 \hat{\tau}_n(X,Y) = \tau(X,Y) + \frac{1}{n}\sum_{i=1}^n \psi_\tau(X_i,Y_i) + o_p(n^{-1/2}),
\]
where $\psi_\tau(\cdot,\cdot)$ is the influence function of $\tau$ satisfying $\mathbb{E}[\psi_\tau(X,Y)] = 0$ and $\mathbb{E}[\psi_\tau^2(X,Y)] = \sigma_\tau^2 > 0$. By the central limit theorem for non-degenerate $U$-statistics (and specifically by Theorem 2 of Cui and Zhong \cite{CZ2019}, we have
\[
\sqrt{n}\left(\hat{\tau}_n(X,Y)-\tau(X,Y)\right) \xrightarrow{d} N\left(0, \sigma_\tau^2\right).
\]

Since $\xi_{+,n} = 6\hat{\tau}_n + o_p(n^{-1/2})$ by construction, Slutsky's theorem gives the desired result:
\[
\sqrt{n}\left(\xi_{+,n}(X,Y)-\xi_+(X,Y)\right) = 6\sqrt{n}\left(\hat{\tau}_n(X,Y)-\tau(X,Y)\right) + o_p(1) \xrightarrow{d} N\left(0, 36\sigma_\tau^2\right).
\]
Thus $\sigma_{\xi}^2 = 36\sigma_\tau^2$, completing the proof.
\end{proof}

\begin{remark}
The asymptotic variance $\sigma_{\tau}^2$ is typically unknown in practice but admits a consistent plug-in estimator.   Define the sample influence function:
\[
\hat{\psi}_\tau(X_i,Y_i) = \frac{\mathbb{I}(X_i = x_{k_i})}{\hat{p}_{k_i}} \int_0^1 \left(\mathbb{I}\left(Y_i \leq \hat{F}_Y^{-1}(u)\right)- \hat{G}_{k_i}(u)\right)\left(\hat{G}_{k_i}(u) - u\right)d u - \hat{\tau}_n,
\]
where $k_i$ indexes the category of the $i$-th observation, and $\hat{G}_{k_i}(u)$ is the empirical conditional distribution of $U$ given $X=x_{k_i}$. The consistent variance estimator is then:
\[
\hat{\sigma}_n^2 = 36 \cdot \frac{1}{n}\sum_{i=1}^n \hat{\psi}_\tau^2(X_i,Y_i).
\]
For inference, the standardized statistic $T_n = \sqrt{n}\left(\xi_{+,n} - \xi_+\right)/\hat{\sigma}_n$ converges in distribution to $N(0,1)$ under $H_1$.
\end{remark}
\begin{remark}
This result contrasts sharply with the null case $\xi_+(X,Y)=0$: under $H_0$, the first-order Hajek projection vanishes, leading to a degenerate $U$-statistic with convergence rate $n$ and a non-normal limit (an infinite mixture of $\chi^2$ variables, as established in Theorem 1). Under $H_1$, non-degeneracy restores the classical $\sqrt{n}$-rate and asymptotic normality, consistent with general $U$-statistic theory.
\end{remark}

\subsection{Asymptotic Power}
To evaluate the efficiency of the test based on $\xi_{+,n}$, we consider the local alternative sequence $H_{1n}: \xi_+(X,Y) = \delta/\sqrt{n}$ for some constant $\delta > 0$. Combining Theorem 3.4 with the contiguity of the local alternatives, we have:
$$
\sqrt{n} \xi_{+,n} \xrightarrow{d} N\left(\delta, \sigma^2\right) \quad \text{under } H_{1n}.
$$
For a level-$\alpha$ test rejecting $H_0$ when $\sqrt{n}\xi_{+,n}/\hat{\sigma}_n > z_{1-\alpha}$ (where $z_{1-\alpha}$ is the $1-\alpha$ quantile of $N(0,1)$), the asymptotic power is:
$$
\beta(\delta) = 1 - \Phi\left(z_{1-\alpha} - \frac{\delta}{\sigma}\right),
$$
where $\Phi(\cdot)$ denotes the standard normal CDF. This implies the test is consistent against any fixed alternative $\xi_+>0$, and the detection boundary aligns with the optimal rate for nonparametric independence tests.

\begin{remark}
The asymptotic behavior of $\xi_{+,n}$ differs qualitatively between the null and alternative hypotheses, consistent with the general theory of $U$-statistics (Koroljuk \& Borovskich \cite{KB1994}):
\begin{itemize}
    \item Under $H_0: \xi_+=0$, the first-order Hajek projection of $\xi_{+,n}$ vanishes, making the statistic degenerate. The convergence rate is $n$, and the limit is a non-normal infinite mixture of $\chi^2$ variables.
    \item Under $H_1: \xi_+>0$, the first-order projection is non-zero, so the statistic is non-degenerate. The convergence rate reduces to $\sqrt{n}$, and the limit follows a normal distribution.
\end{itemize}
This distinction is explicitly validated for the related MV index in Cui \& Zhong (2019), and ensures the test controls Type I error under $H_0$ while maintaining consistency under $H_1$.
\end{remark}

\numberwithin{equation}{section}
 \section{ Extension of correlation}
\setcounter{equation}{0}
   Let ${\bf X}\in{\Bbb R}^p$ ($p\ge 1$) be a random vector and  $Y\in{\Bbb R}^1$ a non-constant scalar  random variable, all defined on the same probability space. Inspired by  the DSS measure, we
    define the measure of dependence between ${\bf X}$ and $Y$:
  \begin{eqnarray}
{\xi}_{k,l, r,s}({\bf X},Y)=B_{k,l, r,s}\int\frac{{\Bbb E}|F_Y^k(y|{\bf X})-F_Y^k(y)|^l}{F_Y^r(y)(1-F_Y(y))^s} {\rm d}F_Y(y),
 \end{eqnarray}
  where
  $$B^{-1}_{k,l,r,s}= \int\frac{{\Bbb E}|1_{\{Y\le y\}}-F_Y^k(y)|^l}{F_Y^r(y)(1-F_Y(y))^s} {\rm d}F_Y(y),$$  is normalizing constant,  and $k,l,r,s\ge 0$ are constants.
The  quantity   ${\xi}_{k,l, r,s}({\bf X},Y)$ is an extension of  Dette-Siburg-Stoimenov's
dependence measure,   and  shares similar properties with $\xi_+(X,Y)$:\\
i) $0\le \xi\le 1$;\\
ii) $\xi = 0$ if and only if ${\bf X}$ and $Y$ are independent;\\
iii) $\xi = 1$ if and only if  $Y$ is a.s. a Borel function of ${\bf X}$.

\numberwithin{equation}{section}
 \subsection{Some special cases}

 1)  Setting  $k=2, l=1, r=s=0$ in (4.1), we get
   \begin{eqnarray}
{\xi}_{2,1, 0,0}({\bf X},Y)=B_{2,1, 0,0}\int{\Bbb E}|F_Y^2(y|{\bf X})-F_Y^2(y)| {\rm d}F_Y(y),\nonumber
 \end{eqnarray}
 which reduces to (2.3) in the special case  $p=1$, where $$B^{-1}_{2,1,0,0}=\int (F_Y(t)-F^2_Y(t)){\rm d}F_Y(t).$$

 2)  Setting  $k=1, l=2, r=s=0$ in (4.1), we get
  \begin{eqnarray}
{\xi}_{1,2, 0,0}({\bf X},Y)=B_{1,2, 0,0}\int{\Bbb E}|F_Y(y|{\bf X})-F_Y(y)|^2 {\rm d}F_Y(y),\nonumber
 \end{eqnarray}
 which reduces to the mean variance (MV) index of $X$ given $Y$ proposed by   Cui, Li and Zhong \cite{CLZ2015}  in the case ${\bf X}$ is a categorical response with finite classes and $Y$ is  a continuous covariate. Here,
 $$B^{-1}_{1,2,0,0}=\int (F_Y(t)-F^2_Y(t)) {\rm d}F_Y(t).$$

It is worth noting that  by 1) and 2) we get
${\xi}_{2,1, 0,0}({\bf X},Y)={\xi}_{1,2, 0,0}({\bf X},Y)$ since
$${\Bbb E}|F_Y(y|{\bf X})-F_Y(y)|^2= {\Bbb E}F^2_Y(y|{\bf X})-F^2_Y(y).$$

 3)  Setting  $k=1, l=2, r=s=1, p=1$ in (4.1), we get
  \begin{eqnarray}
{\xi}_{1,2, 1,1}(X,Y)= \int\frac{{\Bbb E}|(F_Y(y|X))-F_Y(y)|^2}{F_Y(y)-F^2_Y(y)} {\rm d}F_Y(y),\nonumber
 \end{eqnarray}
 which reduces to (2.2) in He, Ma and Xu \cite{HMX2019}  in the case $X$ is a categorical response with finite classes and $Y$ is  a continuous covariate.

4)  Setting  $k=l=r=s=p=1$ in (4.1), we get
  \begin{eqnarray}
{\xi}_{1,1,1,1}(X,Y)&=&\frac12 \int\frac{{\Bbb E}|F_Y(y|X)-F_Y(y)|}{F_Y(y)(1-F_Y(y))} {\rm d}F_Y(y),\nonumber\\
&=&\frac12 \int\int\frac{|F_Y(y|x)-F_Y(y)|}{F_Y(y)(1-F_Y(y))} {\rm d}F_Y(y){\rm d}F_X(x).
 \end{eqnarray}

 5)  Setting  $k=l=p=1, r=s=0$ in (4.1), we get
  \begin{eqnarray}
{\xi}_{1,1, 0,0}(X,Y)&=&\frac{\int{\Bbb E}|F_Y(y|X)-F_Y(y)| {\rm d}F_Y(y)}{2\int [F_Y(t)(1-F_Y(t))] {\rm d}F_Y(t)}.\nonumber\\
&=&\frac{\int\int|F_Y(y|x)-F_Y(y)| {\rm d}F_Y(y){\rm d}F_X(x)}{2\int [F_Y(t)(1-F_Y(t))] {\rm d}F_Y(t)}.
 \end{eqnarray}
 It is not difficult to see that $\int [F_Y(t)-F^2_Y(t)] {\rm d}F_Y(t)>0$. Whenever $Y$ is an absolutely continuous random variable, one has $\int [F_Y(t)-F^2_Y(t)] {\rm d}F_Y(t)=1/6$, and thus $\xi_{1,1,0,0}(X,Y)$ can be simplified as
 \begin{eqnarray}
 \xi_{1,1,0,0}(X,Y)=3\int{\Bbb E}|F_Y(t|X)-F_Y(t)| {\rm d}F_Y(t).
 \end{eqnarray}
 In addition, if $X$  is a discrete random variable with $p_i =P(X = x_i) > 0$ for all $r = 1,2,\cdots$, then
 \begin{eqnarray}
 \xi_{1,1}(X,Y)=3\sum_i p_i\int|F_Y(t|x_i)-F_Y(t)| {\rm d}F_Y(t).
 \end{eqnarray}

 The  measure $\eta:= {\xi}_{1,1, 0,0}(X,Y)$ defined in (4.3) has the following interesting properties:\\
i) $0\le \eta\le 1$;\\
ii) $\eta = 0$ if and only if $X$ and $Y$ are independent;\\
iii) $\eta= 1$ if and only if   $Y$ is almost surely a Borel function of $X$;\\
iv) $\eta$ is invariant under strictly increasing transformations of $X$ and $Y$ separately.

 {\bf Proof}. i) $\eta\ge 0$ is obvious.  Note that
  \begin{eqnarray*}
 \int{E}|F_Y(t|X)-F_Y(t)| {\rm d}F_Y(t)&=& \int{\Bbb E}(1_{\{ Y\le t\}}|F_Y(t|X)-F_Y(t)|) {\rm d}F_Y(t) \nonumber\\
 &&+\int{ E}(1_{\{ Y>t\}}|F_Y(t|X)-F_Y(t)|) {\rm d}F_Y(t) \nonumber\\
 &&\le \int{ E}(1_{\{ Y\le t\}}(1-F_Y(t))) {\rm d}F_Y(t) \nonumber\\
 &&+\int{ E}(1_{\{ Y>t\}}F_Y(t)) {\rm d}F_Y(t) \nonumber\\
 &=& \int F_Y(t)(1-F_Y(t)) {\rm d}F_Y(t)\nonumber\\
 &&+ \int F_Y(t)(1-F_Y(t)) {\rm d}F_Y(t) \nonumber\\
 &=& 2\int F_Y(t)(1-F_Y(t)) {\rm d}F_Y(t).
 \end{eqnarray*}
 Hence,  $\eta\le 1$.\\
 ii) If $X$ and $Y$ are independent, then $F_Y(t|X)=F_Y(t)$ almost surely, which leads to $\eta=0$.
 Conversely, assume that  $\eta=0$. Then,  $\int{\Bbb E}|F_Y(t|X)-F_Y(t)| {\rm d}F_Y(t)=0$. Thus $P(F_Y(t|X)=F_Y(t))=1$.

 iii)  If there exists a Borel function $f$ such that $Y=f(X)$, then $F_Y(t|X)=1_{\{ Y\le t\}}$ a.s., and thus
 \begin{eqnarray*}
  \eta&=&\frac{\int{E}|1_{\{ Y\le t\}} -F_Y(t)| {\rm d}F_Y(t)}{2\int [F_Y(t)-F^2_Y(t)] {\rm d}F_Y(t)}\nonumber\\
  &=&\frac{\int{ E}[1_{\{ Y\le t\}}(1 -F_Y(t))] {\rm d}F_Y(t)}{2\int [F_Y(t)-F^2_Y(t)] {\rm d}F_Y(t)}\nonumber\\
  &&+\frac{\int{ E}[1_{\{ Y>t\}}F_Y(t)] {\rm d}F_Y(t)}{2\int [F_Y(t)-F^2_Y(t)] {\rm d}F_Y(t)}\nonumber\\
  &=&\frac{2\int [F_Y(t)-F^2_Y(t)] {\rm d}F_Y(t)}{2\int [F_Y(t)-F^2_Y(t)] {\rm d}F_Y(t)}\nonumber\\
  &=& 1.
 \end{eqnarray*}

\numberwithin{equation}{section}
 \section{ Measuring conditional dependence}
\setcounter{equation}{0}

Let $Y$ be a random variable, and let ${\bf Z}\in \Bbb{R}^q$ and ${\bf X}\in\Bbb{R}^p$    be two random vectors,
all defined on the same probability space. Azadkia and Chatterjee \cite{AC2021} proposed the following measure of conditional dependence between $Y$
and ${\bf Z}$ given ${\bf X}$:
\[
\xi = \xi(Y,{\bf Z} \mid {\bf X}) :=
\frac{\displaystyle\int E \!\bigl[\operatorname{Var}(P(Y \ge t \mid {\bf X,Z}) \mid {\bf X})\bigr]\,dF_Y(t)}
{\displaystyle\int E\!\bigl[\operatorname{Var}({\bf 1}_{\{Y \ge t\}} \mid {\bf X})\bigr]\,dF_Y(t)}.
\]
The coefficient  $\xi(Y,{\bf Z} \mid {\bf X})$ can be expressed as
\[
\xi(Y,{\bf Z} \mid {\bf X})
=
\frac{\displaystyle\int \bigl(E[F_{Y|{\bf Z,X}}(t-)^2]-E[F_{Y|{\bf X}}(t-)^2]\bigr)\,d F_Y(t)}
{\displaystyle\int E\!\bigl[F_{Y|{\bf X}}(t-)(1-F_{Y|{\bf X}}(t-))\bigr]\,dF_Y(t)},
\]
where $F_{Y}(t)=P(Y\le t)$,  $F_{Y|{\bf X}}(t)=P(Y\le t\mid {\bf X})$ and $F_{Y|{\bf Z,X}}(t)=P(Y\le t\mid {\bf Z,X})$.

  To avoid the appearance of left limits of conditional distribution functions, we modify the definition of  $\xi(Y,{\bf Z} \mid {\bf X})$  as follows:
\[
\xi_+ = \xi_+(Y,{\bf Z} \mid {\bf X}) :=
\frac{\displaystyle\int E\!\bigl[\operatorname{Var}(P(Y> t \mid {\bf X,Z}) \mid {\bf X})\bigr]\,dF_Y(t)}
{\displaystyle\int E\!\bigl[\operatorname{Var}({\bf 1}_{\{Y > t\}} \mid {\bf X})\bigr]\,dF_Y(t)},
\]
which can be expressed  entirely in terms of conditional distribution functions as
\begin{eqnarray*}
\xi_+(Y,{\bf Z} \mid {\bf X})&=&
\frac{\displaystyle\int \bigl(E[F_{Y|{\bf Z,X}}(t)^2]-E[F_{Y|{\bf X}}(t)^2]\bigr)\,d F_Y(t)}
{\displaystyle\int\bigl(F_{Y}(t)-E[F_{Y|{\bf X}}(t)^2]\bigr)\,dF_Y(t)}\\
&=&1-\frac{ \displaystyle\int E[F_{Y|{\bf Z,X}}(t)(1-F_{Y|{\bf Z,X}}(t))]\,dF_Y(t)} {  \displaystyle\int E[F_{Y|{\bf X}}(t)(1-F_{Y|{\bf X}}(t))]\,dF_Y(t)},
\end{eqnarray*}
where the expectations are taken with respect to the joint distribution of $(Z,X)$ in the numerator and $X$ in the denominator.


The original measure $\xi(Y,\mathbf{Z}\mid {\bf X})$ and its modified
version $\xi_+(Y, {\bf Z}\mid {\bf X})$ share the same basic
interpretation and enjoy parallel properties. Specifically, assuming
$Y$ is not almost surely a measurable function of $\mathbf{X}$, then
\begin{enumerate}
\item   Both are well defined and lie in $[0,1]$.
\item $\xi=0$ and $\xi_+=0$ each hold if and only
      if $Y \perp\!\!\!\perp {\bf Z} \mid {\bf X}$ (conditional
      independence). When ${\bf X}=\varnothing$, this reduces to
      unconditional independence $Y \perp\!\!\!\perp {\bf Z}$.
\item  $\xi=1$ and $\xi_+=1$ each hold if
      and only if $Y = f({\bf Z}, {\bf X})$ almost surely for some
      measurable function $f$. When ${\bf X}=\varnothing$, this means
      $Y$ is almost surely a function of $\mathbf{Z}$ alone.
\item  Both measures are non-decreasing in the
      conditioning set: adding components to ${\bf Z}$ does not
      decrease either $\xi$ or $\xi_+$.
\end{enumerate}
The key difference is technical: $\xi$ is defined in terms of the
conditional distribution function directly and may involve left limits
at discontinuity points, whereas $\xi_+$ is constructed to avoid left
limits entirely. Consequently,
$\xi_+$ is easier to estimate nonparametrically and its asymptotic
theory is cleaner, while the two coincide in practice whenever the
conditional CDF is continuous in $t$.

We estimate $\xi_+(Y,{\bf Z}\mid {\bf X})$ from $n$ i.i.d.\
observations $\{(Y_i,{\bf Z}_i, {\bf X}_i)\}_{i=1}^n$ using the
following model-based plug-in procedure. For each observation $i$, we
obtain out-of-sample predictions of the conditional CDFs
$\widehat{F}^{(-i)}_{Y|{\bf Z}, {\bf X}}(Y_i)$ and
$\widehat{F}^{(-i)}_{Y| {\bf X}}(Y_i)$ from two supervised learners:
a full model of $Y$ on $({\bf Z}, {\bf X})$ and a reduced model
of $Y$ on ${\bf X}$ alone.   The    consistent estimator of   $\xi_+(Y, {\bf Z}\mid {\bf X})$
  is defined by
\begin{equation}
\widehat{\xi}_{+}
=
1
-
\frac{
  n^{-1}\sum_{i=1}^n
  \widehat{F}^{(-i)}_{Y|{\bf Z},{\bf X}}(Y_i)
  [1-\widehat{F}^{(-i)}_{Y|{\bf Z},{\bf X}}(Y_i)]
}{
  n^{-1}\sum_{i=1}^n
  \widehat{F}^{(-i)}_{Y|{\bf X}}(Y_i)
  [1-\widehat{F}^{(-i)}_{Y|{\bf X}}(Y_i)]
},
\end{equation}
where
\begin{equation*}
\label{def:Fhat_lofo}
\widehat{F}^{(-i)}_{Y|{\bf Z},{\bf X}}(Y_i)
\;:=\;
\widehat{m}^{(-i)}({\bf Z}_i,{\bf X}_i;\,Y_i)
\;=\;
\widehat{P}\!\left(Y \le Y_i \;\big|\; {\bf Z}={\bf Z}_i,
{\bf X}={\bf X}_i;\,\cal{D}^{(-i)}\right).
\end{equation*}
 Here, $\mathcal{D}^{(-i)}=\{(Y_j,{\bf Z}_j,{\bf X}_j):j\neq i\}$
denote the sample with the $i$-th observation removed.
The definition of  $\widehat{F}^{(-i)}_{Y|{\bf X}}(Y_i)$ is
analogous, replacing the feature vector $({\bf Z,X})$ by ${\bf X}$ alone.

\section{Simulation study }
\label{sec:simulation}
In this section, we present a numerical performance assessment of the sample estimators $\xi_n(X,Y)$ and $\xi_{+,n}(X,Y)$, targeting the population dependence measures $\xi(X,Y)$ and $\xi_{+}(X,Y)$, respectively, under the bivariate normal distribution $\mathcal{N}(0,0,1,1; r)$. Here, $r \in (-1,1)$ denotes the population Pearson correlation coefficient between $X$ and $Y$, which serves as a canonical benchmark for linear dependence strength.

This parametric family is selected for two key reasons: first, closed-form expressions for both $\xi(X,Y)$ and $\xi_{+}(X,Y)$ are available under normality, eliminating numerical error in the true parameter value; second, the $r=0$ configuration corresponds exactly to the null hypothesis $H_0: \xi_+(X,Y)=0$ analyzed in Section 3.3, while $r \neq 0$ cases map to the fixed alternative regime where asymptotic normality holds (Section 3.4). This allows for a direct validation of our earlier theoretical results.

{\bf Monte Carlo simulation results for $\xi_{+,n}(X,Y)$}.  We evaluate the finite-sample performance of the local-rank estimator
\[
\xi_{+,n}(X,Y)=
\frac{n\sum_{i=1}^{n}\sum_{j=1}^{n}
\Bigl(\frac{R_{i,j}}{n_j}-\frac{R_i}{n}\Bigr)^{2}}
{\sum_{i=1}^{n}R_i\,(n-R_i)},
\]
where
\[
R_i=\sum_{k=1}^{n}\mathbf{1}_{\{Y_k\le Y_i\}},\qquad
n_j=\sum_{k=1}^{n}K_h(X_k-X_j),
\]
and
\[
R_{i,j}=\sum_{k=1}^{n}\mathbf{1}_{\{Y_k\le Y_i\}}K_h(X_k-X_j).
\]
The weighting function \(K_h(\cdot)\) is the Gaussian kernel with bandwidth
\(h=1.06\,\widehat{\sigma}_X n^{-1/5}\).

Data are generated from \((X,Y)\sim\mathcal{N}(0,0,1,1;r)\) with
\(r\in\{-0.9,0,0.5,0.8\}\) and sample sizes
\(n\in\{20,50,200,500,1000\}\).
All results are based on \(M=10000\) independent replications.
The theoretical benchmark is
\[
\xi(r)=\frac{3}{\pi}\arcsin\!\Bigl(\frac{1+r^{2}}{2}\Bigr)-\frac{1}{2}.
\]
\begin{table}[htbp]
\centering
\caption{Simulation results for $\xi_{+,n}(X,Y)$ based on $M=10{,}000$ replications.
Data are generated from $(X,Y)\sim\mathcal{N}(0,0,1,1;r)$.
The bandwidth is $h=1.06\,\widehat{\sigma}_X n^{-1/5}$.
The last column reports the theoretical benchmark
$\xi(r)=\frac{3}{\pi}\arcsin\!\bigl(\frac{1+r^{2}}{2}\bigr)-\frac{1}{2}$.}
\label{tab:xi_sim}
\renewcommand{\arraystretch}{1.25}
\begin{tabular}{c c c c c c c}
\hline\hline
\rule{0pt}{3ex}
$r$ & $n$ & Mean & Std.\ Dev. & Bias & MCse & $\xi(r)$ \\
\hline
\rule{0pt}{2.5ex}
$-0.9$ & $20$   & $0.3363$ & $0.0510$ & $-0.2441$ & $0.0005$ & $0.5804$ \\
       & $50$   & $0.3845$ & $0.0378$ & $-0.1959$ & $0.0004$ & $0.5804$ \\
       & $200$  & $0.4500$ & $0.0219$ & $-0.1303$ & $0.0002$ & $0.5804$ \\
       & $500$  & $0.4836$ & $0.0152$ & $-0.0968$ & $0.0002$ & $0.5804$ \\
       & $1000$ & $0.5044$ & $0.0111$ & $-0.0760$ & $0.0001$ & $0.5804$ \\
[6pt]
$0$    & $20$   & $0.0687$ & $0.0335$ & $0.0687$  & $0.0003$ & $0.0000$ \\
       & $50$   & $0.0392$ & $0.0173$ & $0.0392$  & $0.0002$ & $0.0000$ \\
       & $200$  & $0.0165$ & $0.0061$ & $0.0165$  & $0.0001$ & $0.0000$ \\
       & $500$  & $0.0091$ & $0.0029$ & $0.0091$  & $0.0001$ & $0.0000$ \\
       & $1000$ & $0.0057$ & $0.0017$ & $0.0057$  & $0.0000$ & $0.0000$ \\
[6pt]
$0.5$  & $20$   & $0.1454$ & $0.0656$ & $0.0007$  & $0.0007$ & $0.1447$ \\
       & $50$   & $0.1282$ & $0.0514$ & $-0.0165$ & $0.0005$ & $0.1447$ \\
       & $200$  & $0.1242$ & $0.0290$ & $-0.0205$ & $0.0003$ & $0.1447$ \\
       & $500$  & $0.1286$ & $0.0172$ & $-0.0161$ & $0.0002$ & $0.1447$ \\
       & $1000$ & $0.1295$ & $0.0141$ & $-0.0152$ & $0.0001$ & $0.1447$ \\
[6pt]
$0.8$  & $20$   & $0.2627$ & $0.0678$ & $-0.1554$ & $0.0007$ & $0.4181$ \\
       & $50$   & $0.2888$ & $0.0499$ & $-0.1293$ & $0.0005$ & $0.4181$ \\
       & $200$  & $0.3274$ & $0.0299$ & $-0.0907$ & $0.0003$ & $0.4181$ \\
       & $500$  & $0.3523$ & $0.0174$ & $-0.0658$ & $0.0002$ & $0.4181$ \\
       & $1000$ & $0.3652$ & $0.0138$ & $-0.0529$ & $0.0001$ & $0.4181$ \\
\hline\hline
\end{tabular}
\end{table}
Detailed simulation results are presented in Table 2.

{\bf Monte Carlo Simulation Results for $\xi_n$}.
We assess the finite-sample performance of the rank-based estimator
\[
\xi_n = 1 - \frac{3\sum_{i=1}^{n-1}|R_{i+1}-R_i|}{n^2-1},
\]
where \(R_i\) denotes the rank of \(Y_{(i)}\) after sorting the observations by \(X\) in ascending order.
Data are generated from $(X,Y)\sim\mathcal{N}(0,0,1,1;r)$ for $r\in\{-0.9,0,0.5,1\}$ and sample sizes $n\in\{20, 50, 200, 500, 1000\}$.   All results are based on $M=10000$ Monte Carlo replications. The theoretical values are given by
\[
\xi(r) = \frac{3}{\pi}\arcsin\left(\frac{1+r^2}{2}\right) - \frac{1}{2}.
\]
 Simulation results are reported in Table 3.

\begin{table}[htbp]
\centering
\caption{Simulation results for $\xi_n$ based on $M=10{,}000$ Monte Carlo replications.
Data are generated from $(X,Y)\sim\mathcal{N}(0,0,1,1;r)$.
The estimator is $\xi_n=1-\frac{3}{n^{2}-1}\sum_{i=1}^{n-1}|R_{i+1}-R_i|$,
where $R_i$ is the rank of $Y_{(i)}$ after sorting by $X$ in ascending order.
The last column reports the theoretical benchmark
$\xi(r)=\frac{3}{\pi}\arcsin\!\bigl(\frac{1+r^{2}}{2}\bigr)-\frac{1}{2}$.}
\label{tab:xi_n_sim}
\renewcommand{\arraystretch}{1.25}
\begin{tabular}{c c c c c c}
\hline\hline
\rule{0pt}{3ex}
$r$ & $n$ & Mean & Std.\ Dev. & Bias & $\xi(r)$ \\
\hline
\rule{0pt}{2.5ex}
$-0.9$ & $20$   & $0.4921$ & $0.1095$ & $-0.0883$ & $0.5804$ \\
       & $50$   & $0.5451$ & $0.0701$ & $-0.0353$ & $0.5804$ \\
       & $200$  & $0.5724$ & $0.0343$ & $-0.0080$ & $0.5804$ \\
       & $500$  & $0.5774$ & $0.0218$ & $-0.0030$ & $0.5804$ \\
       & $1000$ & $0.5788$ & $0.0155$ & $-0.0016$ & $0.5804$ \\
[6pt]
$0$    & $20$   & $0.0002$ & $0.1308$ & $0.0002$  & $0.0000$ \\
       & $50$   & $-0.0010$ & $0.0886$ & $-0.0010$ & $0.0000$ \\
       & $200$  & $-0.0000$ & $0.0447$ & $-0.0000$ & $0.0000$ \\
       & $500$  & $-0.0001$ & $0.0286$ & $-0.0001$ & $0.0000$ \\
       & $1000$ & $-0.0000$ & $0.0203$ & $-0.0000$ & $0.0000$ \\
[6pt]
$0.5$  & $20$   & $0.1165$ & $0.1488$ & $-0.0282$ & $0.1447$ \\
       & $50$   & $0.1333$ & $0.0973$ & $-0.0114$ & $0.1447$ \\
       & $200$  & $0.1408$ & $0.0504$ & $-0.0039$ & $0.1447$ \\
       & $500$  & $0.1435$ & $0.0321$ & $-0.0012$ & $0.1447$ \\
       & $1000$ & $0.1440$ & $0.0225$ & $-0.0007$ & $0.1447$ \\
[6pt]
$0.8$  & $20$   & $0.3329$ & $0.1271$ & $-0.0852$ & $0.4181$ \\
       & $50$   & $0.3799$ & $0.0779$ & $-0.0382$ & $0.4181$ \\
       & $200$  & $0.4068$ & $0.0391$ & $-0.0113$ & $0.4181$ \\
       & $500$  & $0.4134$ & $0.0244$ & $-0.0047$ & $0.4181$ \\
       & $1000$ & $0.4159$ & $0.0171$ & $-0.0022$ & $0.4181$ \\
\hline\hline
\end{tabular}
\end{table}
We compare the finite-sample performance of two estimators targeting the theoretical benchmark
$\xi(r)=\frac{3}{\pi}\arcsin\bigl((1+r^{2})/2\bigr)-\frac{1}{2}$.
The first, denoted $\xi_{+,n}$, is a locally weighted rank-based statistic employing a Gaussian kernel with bandwidth $h=1.06\,\widehat{\sigma}_{X}n^{-1/5}$.
The second, $\xi_n=1-3\sum_{i=1}^{n-1}|R_{i+1}-R_i|/(n^{2}-1)$, is a simple rank-difference measure computed after sorting by $X$.
Both are evaluated at $r\in\{-0.9,0,0.5,0.8\}$ across sample sizes $n\in\{20,50,200,500,1000\}$ with $M=10{,}000$ Monte Carlo replications.
At independence ($r=0$), $\xi_{+,n}$ exhibits substantially smaller variance ($\mathrm{SD}\approx0.002$ at $n=1000$) than $\xi_n$ ($\mathrm{SD}\approx0.020$), indicating better Type~I error control for hypothesis testing.
However, $\xi_n$ demonstrates markedly faster convergence in bias: for $r=-0.9$, the bias of $\xi_n$ drops from $-0.088$ at $n=20$ to $-0.002$ at $n=1000$, whereas $\xi_{+,n}$ retains a bias of $-0.076$ even at $n=1000$.
For strong positive dependence ($r=0.8$), $\xi_n$ similarly achieves near-zero bias ($-0.002$) by $n=1000$, compared to $-0.052$ for $\xi_{+,n}$.
Overall, $\xi_{+,n}$ is preferable when tight control under the null is critical, whereas $\xi_n$ offers a simpler, lower-bias alternative for point estimation of $\xi(r)$.

In addition, unlike   the estimator $\xi_{+,n}$, the estimator $\xi_n$ is not constrained to non-negative values.
As demonstrated in Table  3, $\xi_n$ can take negative values-most notably under independence ($r=0$), where the Monte Carlo mean is slightly negative for several sample sizes (e.g., $n=50, 200, 500, 1000$).

\numberwithin{equation}{section}
 \section{ Conclusions}
\setcounter{equation}{0}

  This paper introduces a refined dependence measure based on the DSS framework and establishes its theoretical properties, including several equivalent representations. We construct an estimator, prove its strong consistency, derive its limiting distribution, and propose a meaningful generalization. Simulation studies show that the proposed estimator exhibits superior finite-sample performance compared to Chatterjee's rank correlation coefficient. A key theoretical advantage is that our estimator is guaranteed to be non-negative-thereby correcting a well-known deficiency of Chatterjee's measure. Notably, although the two estimators are asymptotically equivalent in value, they have distinct asymptotic distributions. Future research will extend the analysis of the limiting distribution under general regularity conditions and explore empirical implementations.

\noindent{\bf Acknowledgements.}
This research   was supported by the National Natural Science Foundation of China (Grant No. 12071251, 12301605).

\end{document}